\newtheorem{theorem}{Theorem}[section]
\newtheorem{lemma}[theorem]{Lemma}
\theoremstyle{definition}
\newtheorem{definition}[theorem]{Definition}
\newtheorem{assumption}[theorem]{Assumption}
\numberwithin{equation}{section}
\numberwithin{subsection}{section}
\newcommand{\spt}{\mathrm{spt}} 
\newcommand{\dist}{\mathrm{dist}} 
\newcommand{\tr}{\mathrm{tr}} 
\newcommand{\di}{\mathrm{div}} 
\newcommand{\weakstar}{\stackrel{*}{\rightharpoonup}}
\title[Level set mean curvature flow with Neumann boundary conditions]{Level set mean curvature flow \\ with Neumann boundary conditions}
\author[S. Aimi]{Satoru Aimi}
\address[S.Aimi]{Department of Mathematics, Tokyo Institute of Technology, 2-12-1 Ookayama, Meguro-ku, Tokyo 152-8551, Japan}
\email{aimisatoru@gmail.com}
\thanks{Department of Mathematics, Tokyo Institute of Technology, 2-12-1 Ookayama, Meguro-ku, Tokyo 152-8551, Japan}
\thanks{E-mail: aimisatoru@gmail.com}
\subjclass[2020]{53E10, 49Q20}
\begin{document}

\begin{abstract}
We investigate the relation between the level set approach and the varifold approach for the mean curvature flow with Neumann boundary conditions. With an appropriate initial data, we prove that the almost all level sets of the unique viscosity level set solution satisfy Brakke's inequality  and a generalized Neumann boundary condition.\\

\textit{Keywords:} mean curvature flow, Neumann boundary conditions, level set method, varifolds.\\

\end{abstract}
\maketitle


\section{Introduction}
Let $\Omega$ be a bounded domain with a smooth boundary $\partial\Omega$ in ${\mathbb R}^N$. We consider a family of hypersurfaces $\{\Gamma_t\}_{t\geq 0}$ in $\overline{\Omega}$ which evolves according to its mean curvature with Neumann boundary conditions, i.e.
\begin{equation} \label{MCF}
\begin{cases}
V=-H&\textrm{on}~\Gamma_t,\\
\Gamma_t\perp\partial \Omega,
\end{cases}
\end{equation}
where $V$ is the normal velocity of $\Gamma_t$ and $H$ is the mean curvature of $\Gamma_t$. In the case of $\Omega={\mathbb R}^N$, i.e. without boundary, the problem has long been studied and several different approaches have been proposed. One of these approaches is called the level set method (see \cite{CGG,Evans1}), another is based on geometric measure theory called Brakke flow (see \cite{Brakke}). 
The consistency between these approaches is studied by numerous researchers. In particular, we mention \cite{Evans4}, which showed a relation between the level set method and the Brakke flow when $\Omega={\mathbb R}^N$, namely, with an appropriate initial data, they showed that almost all level sets of the unique viscosity 
solution are Brakke flows.
In the present paper, we investigate the similar relation when the problem is supplemented by the 
Neumann boundary condition.\par

Here, we mention some works related to the mean curvature flow with boundary conditions.
Sato \cite{Sato} proved the existence of a unique viscosity level set solution of \eqref{MCF} and Giga and Sato \cite{GigaSato} established the comparison principle. In the classical setting, Stahl \cite{Stahl1} proved a short-time existence of a unique smooth mean curvature flow with Neumann boundary conditions. For a long-time existence result, 
Mizuno and Tonegawa \cite{MizuTone} introduced a notion of the Neumann boundary condition for Brakke flow and proved its existence via the Allen-Cahn equation (see also \cite{Kagaya}). In \cite{Edelen}, Edelen also studied 
the Brakke flow with Neumann boundary conditions and established its existence and a short-time regularity 
via the elliptic regularization approach.\par

In this paper, we investigate a connection between the level set solution of \eqref{MCF} and Brakke flow. First, we consider the following PDE:
\begin{equation} \label{intro:level set MCF}
\begin{cases}
\partial_t u=|\nabla u|\,\textrm{div}\left(\frac{\nabla u}{|\nabla u|}\right)&\textrm{in}~\Omega\times (0,\infty),\\
\nabla u\cdot \nu_{\partial \Omega}=0&\textrm{on}~\partial \Omega\times (0,\infty),\\
u=g&\textrm{on}~\Omega\times \{t=0\},
\end{cases}
\end{equation}
where $\nu_{\partial \Omega}$ denotes the outer unit normal of $\partial \Omega$, and $g:{\mathbb R}^N\rightarrow {\mathbb R}$ satisfies
\begin{equation}
\begin{cases}
\Gamma_0=\{x\in \overline{\Omega}~|~g(x)=0\},\\
\nabla g\cdot \nu_{\partial \Omega}=0\quad\textrm{on}~\partial \Omega.
\end{cases}
\end{equation}

Heuristically, this PDE asserts that each level set of $u$ evolves by \eqref{MCF}, if $u$ is smooth and $|\nabla u|\not=0$. Mostly following the similar argument in \cite{Evans4}, we show that almost all level sets of $u$ are Brakke flows if $g$ is chosen appropriately. The main interest of the present paper lies in the characterization 
of the boundary condition that the Brakke flow satisfies. It turned out that the first variation of 
the Brakke flow on $\partial\Omega$ is bounded and 
perpendicular to $\partial\Omega$ almost everywhere, which was not guaranteed in \cite{MizuTone}.
Moreover, the class of admissible test
functions for Brakke's inequality is strictly larger than those in \cite{Edelen,MizuTone} in that our 
test functions $\phi$ do not need to satisfy the Neumann boundary conditions $\nabla\phi\cdot\nu_{\partial\Omega}=0$ which
was required in \cite{Edelen,MizuTone}. One intuitive reason for the difference is that the conclusion here
is for almost all level sets and they
have null $(N-1)$-dimensional measure on $\partial\Omega$, while the Brakke flows in \cite{Edelen,MizuTone} may have
non-trivial measure on $\partial\Omega$. 

Since almost all level sets are in addition unit-density, they are smooth almost everywhere by the general regularity theory in \cite{Brakke,KasaTone,Tone}. The regularity up to the boundary is not known so far on the other hand.

The organization of the paper is as follows. In Section 2, we state the basic notation and main results. In Section 3, we obtain a crucial $L^1$ estimate of the curvature which is a localized version of \cite{Evans4}. This estimate leads to various convergence results, allowing the derivations of the first variation formula and Brakke's inequality for the level set solution. In Section 4, we interpret formulae obtained in Section 3 into varifolds settings. We show that the first variation is bounded and 
conclude that almost all level sets of $u$ are Brakke flows with Neumann boundary conditions. In Appendix, a technical construction of the initial data $g$ is carried out. 

\section{Preliminaries and main results}
\subsection{Notation and assumption}
Let $N\geq 2$ be an integer. For a $C^1$ vector field $X$ in ${\mathbb R}^N$, we write $\nabla X:=(\partial_i X^j)_{i,j}$. The symbol $\mu\lfloor_{A}$ denotes the restriction of a measure $\mu$ to a set $A$. Throughout the paper, we make the following assumption on $\Omega$ and initial surface $\Gamma_0$.

\begin{assumption}\label{asm}
Assume that $U,~\Omega\subseteq {\mathbb R}^N$ with $U\cap \Omega\neq \emptyset$ are bounded open sets with smooth boundaries and we set the initial surface $\Gamma_0:=\partial U\cap \overline{\Omega}$. Assume that $\partial U\cap \partial \Omega\not=\emptyset$ and that $\Gamma_0$ meets $\partial \Omega$ orthogonally.
\end{assumption}

We write $\nu_{\partial \Omega}$ for the outer unit normal of $\partial \Omega$. 

\subsection{Varifolds}
A set $\textrm{G}_k({\mathbb R}^N)$ denotes the Grassmannian, i.e. $\textrm{G}_k({\mathbb R}^N)$ is a collection of $k$-dimensional linear subspaces of ${\mathbb R}^N$. We often identify $S\in \textrm{G}_k({\mathbb R}^N)$ with the orthogonal projection from ${\mathbb R}^N$ onto $S$. A $k$-varifold in $\overline{\Omega}$ is a Radon measure on $\overline{\Omega}\times \textrm{G}_k({\mathbb R}^N)$. By the Riesz representation theorem, $k$-varifolds in $\overline{\Omega}$ correspond bijectively to bounded linear functionals on the space of continuous functions $C(\overline{\Omega}\times \textrm{G}_k({\mathbb R}^N))$. For a $k$-varifold $V$ in $\overline{\Omega}$, 
$\|V\|$ denotes its weight measure, i.e.
\begin{equation}
\|V\|(\phi):=\int_{\overline{\Omega}\times \textrm{G}_k({\mathbb R}^N)}\phi(x)\,dV(x,S)
\quad\textrm{for}~\phi\in C(\overline{\Omega}).
\end{equation}
We write $\delta V$ for the first variation of $V$, i.e.
\begin{equation}
\delta V(X):=\int_{\overline{\Omega}\times \textrm{G}_k({\mathbb R}^N)}\textrm{div}_S(X)\,dV(x,S)
\quad\textrm{for}~X\in C^1(\overline{\Omega};{\mathbb R}^N),
\end{equation}
where $\textrm{div}_S(X):=\tr(S\nabla X)$. We write $\|\delta V\|(\overline{\Omega})$ for the total variation of $\delta V$ in $\overline{\Omega}$, i.e.
\begin{equation}
\|\delta V\|(\overline{\Omega}):=\sup\left\{\delta V(X)~\left|~X\in C^1(\overline{\Omega};{\mathbb R}^N),~\|X\|_{\infty}\leq 1\right\}\right..
\end{equation}
A $k$-rectifiable set $\Gamma\subseteq \overline{\Omega}$ induces a $k$-varifold in $\overline{\Omega}$ as in
\begin{equation}
V_{\Gamma}(\phi):=\int_{\Gamma}\phi(x,T_x\Gamma)\,d\mathcal{H}^k(x)
\quad\textrm{for}~\phi\in C(\overline{\Omega}\times \textrm{G}_k({\mathbb R}^N)).
\end{equation}
Here, note that a $k$-rectifiable set $\Gamma$ has a unique approximate tangent space $T_x\Gamma$ for $\mathcal{H}^k$-a.e. $x\in \Gamma$. A varifold induced from a rectifiable set as above is said to be unit-density. We work mainly with unit-density varifolds.

\subsection{Level set solution}
Consider the following singular second order parabolic PDE:
\begin{equation} \label{level set MCF}
\begin{cases}
\partial_t u=|\nabla u|\,\textrm{div}\left(\frac{\nabla u}{|\nabla u|}\right)&\textrm{in}~\Omega\times (0,\infty),\\
\nabla u\cdot \nu_{\partial \Omega}=0&\textrm{on}~\partial \Omega\times (0,\infty),\\
u=g&\textrm{on}~\Omega\times \{t=0\}.
\end{cases}
\end{equation}
We approximate \eqref{level set MCF} by the following nonsingular PDE for $\varepsilon\in (0,1)$:
\begin{equation} \label{approx eq} \tag{\ref{level set MCF}$_{\varepsilon}$}
\begin{cases}
\partial_t u^{\varepsilon}=\sqrt{|\nabla u^{\varepsilon}|^2+\varepsilon^2}\,\textrm{div}\left(\frac{\nabla u^{\varepsilon}}{\sqrt{|\nabla u^{\varepsilon}|^2+\varepsilon^2}}\right)&\textrm{in}~\Omega\times (0,\infty),\\
\nabla u^{\varepsilon}\cdot \nu_{\partial \Omega}=0&\textrm{on}~\partial \Omega\times (0,\infty),\\
u^{\varepsilon}=g&\textrm{on}~\Omega\times \{t=0\}.
\end{cases}
\end{equation}
We construct a suitable initial data $g$ with $\{g=0\}=\Gamma_0$ which is smooth and which has a compatible 
Neumann condition but we defer the description of $g$ at this point. Here, we note that the equation \eqref{approx eq} is uniformly parabolic, provided $\|\nabla u^{\varepsilon}\|_{\infty}$ is finite. Thus the existence of a unique bounded solution $u^{\varepsilon}\in C^{\infty}(\overline{\Omega}\times[0,\infty))$ follow by an apriori estimate for $\|\nabla u^{\varepsilon}\|_{\infty}$ and the theory of parabolic equations (see for instance \cite{Huisken, Lieberman}).\par
Furthermore, one may have
\begin{equation}\label{bounds for level set solution}
\sup_{\substack{x\in \Omega\\0\leq t\leq T}}|u^{\varepsilon}|,~|\nabla u^{\varepsilon}|,~|\partial_t u^{\varepsilon}|\leq C_T
\end{equation}
with a constant $C_T>0$ independent of $\varepsilon$. Indeed, like in \cite{Evans1}, the bounds for $u^{\varepsilon}$ and $\partial_t u^{\varepsilon}$ follow by the maximum principle with Neumann conditions. Set $W:=|\nabla u^{\varepsilon}|^2$ and $\sigma_{ij}(p):=\delta_{ij}-\frac{p_ip_j}{|p|^2+\varepsilon^2}$, and let $d:{\mathbb R}^N\rightarrow [-1,1]$ be a smooth function such that $\nabla d\cdot \nu_{\partial\Omega}\geq 1$ on $\partial \Omega$. Then we can see
\[\partial_t W=\sigma_{ij}(\nabla u^{\varepsilon})\partial_{ij}W+\partial_k\sigma_{ij}(\nabla u^{\varepsilon})\partial_{ij}u^{\varepsilon}\partial_k W-2\sigma_{ij}(\nabla u^{\varepsilon})\partial_{ik}u^{\varepsilon}\partial_{jk}u^{\varepsilon}\leq \sigma_{ij}(\nabla u^{\varepsilon})\partial_{ij}W,\]
where we use the following computation and the positive-definiteness of $\sigma_{ij}$:
\begin{align*}
&\partial_k \sigma_{ij}(\nabla u^{\varepsilon})\partial_{ij}u^{\varepsilon}\partial_k W=\left(\frac{2\partial_i u\partial_j u \partial_k u}{(|\nabla u^{\varepsilon}|^2+\varepsilon^2)^2}-\frac{\delta_{ik}\partial_j u+\delta_{jk}\partial_i u}{|\nabla u^{\varepsilon}|^2+\varepsilon^2}\right)\partial_{ij}u^{\varepsilon}\partial_k W\\
&\quad =\frac{|\nabla u\cdot \nabla W|^2}{(W+\varepsilon^2)^2}-\frac{|\nabla W|^2}{W+\varepsilon^2}\leq 0.
\end{align*}
On the other hand, we calculate $\nabla W\cdot \nu_{\partial\Omega}\leq C\|\nabla \nu_{\partial\Omega}\|_{\infty}W$ on $\partial \Omega$ for some constant $C$ (see \cite[Proposition 2.1]{GigaOhmuraSato}). Therefore, for sufficiently large $\alpha,\beta>0$, $v:=e^{-\alpha t-\beta d}W$ is a subsolution to a linear 2nd order parabolic PDE with a non-positive 0th order coefficient. Hence we conclude by the maximum principle,
\[|\nabla u^{\varepsilon}(x,t)|^2\leq e^{\alpha T+2\beta}\sup_{x\in \Omega}|\nabla g|^2\]
for $x\in \Omega, 0\leq t\leq T$.\par
By the general theory of viscosity solutions, there exists a unique viscosity solution $u$ to \eqref{level set MCF}, and as $\varepsilon\to 0$, $u^{\varepsilon}$ converges to $u$ locally uniformly on $\overline{\Omega}\times [0,\infty)$. 
See \cite{Giga,GigaSato,Sato} for the existence and uniqueness of the viscosity solution in this case. 
Due to the degeneracy of \eqref{level set MCF}, $u$ is expected to be merely Lipschitz continuous in time and space in general. 
\begin{definition}
For the unique viscosity solution $u$ of \eqref{level set MCF}, we set
\begin{equation}
\Gamma_t^{\gamma}:=\{x\in \overline{\Omega}~|~u(x,t)=\gamma\}
\end{equation}
for $(\gamma,t)\in {\mathbb R}\times [0,\infty)$, and define (note that $u$ is differentiable a.e. due to
Rademacher's theorem)
\begin{equation}
\nu:=
\begin{cases}
\frac{\nabla u}{|\nabla u|}&\textrm{if}~|\nabla u|>0,\\
0&\textrm{if}~|\nabla u|=0,\\
\end{cases}
\quad
H:=
\begin{cases}
\frac{\partial_t u}{|\nabla u|}&\textrm{if}~|\nabla u|>0,\\
0&\textrm{if}~|\nabla u|=0.
\end{cases}
\end{equation}
By \cite[Lemma 6.1]{Evans4}, $\Gamma_t^{\gamma}$ is $(N-1)$-rectifiable for a.e. $(\gamma,t)\in\mathbb R\times
(0,\infty)$ and $\Gamma_t^{\gamma}$ induces a unit-density $(N-1)$-varifold $V_t^{\gamma}$ for such $(\gamma,t)$.
\end{definition}

\subsection{Main results}
Now we describe the main results in the present paper.
We need to have a suitable initial data with the following property.
\begin{lemma} \label{initial datum}
There exists a smooth and bounded function $g:\overline{\Omega}\rightarrow {\mathbb R}$ such that
\begin{itemize}
\item[(i)] $\Gamma_0=\{x\in \overline{\Omega}~|~g(x)=0\}$, 
\item[(ii)] $\nabla g\cdot \nu_{\partial \Omega}=0$ on $\partial \Omega$,
\item[(iii)] $\displaystyle \sup_{0<\varepsilon<1}\int_{\Omega}\left|\di\left(\frac{\nabla g}{\sqrt{|\nabla g|^2+\varepsilon^2}}\right)\right|\,dx<\infty$.
\end{itemize}
\end{lemma}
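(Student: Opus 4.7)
The plan is to define $g$ as a smooth bounded function of the signed distance to an $R$-symmetric extension of $\Gamma_0$ across $\pa\Omega$, where $R$ denotes reflection across $\pa\Omega$ in a tubular neighborhood $N_\delta(\pa\Omega)$. The orthogonality $\Gamma_0\perp\pa\Omega$ ensures that the even-in-the-normal-direction extension of $\Gamma_0$ is at least $C^{1,1}$ at the seam and can be mollified to a $C^\infty$ hypersurface $\Sigma'$ while preserving $R$-symmetry and the identity $\Sigma'\cap\overline\Omega=\Gamma_0$; the $R$-symmetry will give (ii) automatically, and the signed-distance structure $|\nabla\tilde d|=1$ will reduce the verification of (iii) to a direct calculation plus one coarea estimate.

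More concretely, fix Fermi coordinates $(y,r)$ on $N_\delta(\pa\Omega)$ with $R:(y,r)\mapsto(y,-r)$ and let $\Sigma'$ be as above. Let $\tilde d$ be the signed distance to $\Sigma'$, smooth on a tubular neighborhood $T$ of $\Sigma'$ where $|\nabla\tilde d|=1$ and hence $\nabla\tilde d\cdot\nabla^2\tilde d=0$. Choose $\rho>0$ with $\{|\tilde d|\le\rho\}\subset T$ and a smooth odd nondecreasing $\psi:\R\to\R$ with $\psi(s)=s$ for $|s|\le\rho/2$ and $\psi$ constant for $|s|\ge\rho$. Define $g:=\psi(\tilde d)$ on $T$ and extend by the constants $\pm\psi(\rho)$ to all of $\overline\Omega$. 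Then $g\in C^\infty(\overline\Omega)$ is bounded, and since $\psi$ is strictly increasing on $[-\rho,\rho]$ with $\psi(0)=0$, the zero set of $g$ is $\Sigma'\cap\overline\Omega=\Gamma_0$, giving (i). For (ii), $R$-symmetry of $\Sigma'$ forces $\tilde d\circ R=\tilde d$ on $N_\delta(\pa\Omega)$, hence $g\circ R=g$ there, and so $\nabla g\cdot\nu_{\pa\Omega}=0$ on $\pa\Omega$.

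For (iii), the chain rule combined with $|\nabla\tilde d|=1$ and $\nabla\tilde d\cdot\nabla^2\tilde d=0$ yields on $T$
\[\di\!\left(\frac{\nabla g}{\sqrt{|\nabla g|^2+\eps^2}}\right)=\frac{\psi'(\tilde d)\,\Delta\tilde d}{\sqrt{\psi'(\tilde d)^2+\eps^2}}+\frac{\psi''(\tilde d)\,\eps^2}{(\psi'(\tilde d)^2+\eps^2)^{3/2}},\]
while the expression vanishes on $\overline\Omega\setminus T$ (where $g$ is constant). The first term is pointwise bounded by $\|\Delta\tilde d\|_{L^\infty(T)}$ uniformly in $\eps$. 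For the second, the coarea formula (using $|\nabla\tilde d|=1$) together with the uniform bound $\Ha^{N-1}(\{\tilde d=s\}\cap\Omega)\le C$ reduce its $L^1$ norm to $C\int_\R|\psi''(s)|\,\eps^2(\psi'(s)^2+\eps^2)^{-3/2}\,ds$; on each subinterval where $\psi'$ is strictly monotone, the substitution $t=\psi'(s)$ turns this into $\int_0^1\eps^2(t^2+\eps^2)^{-3/2}\,dt=(1+\eps^2)^{-1/2}\le 1$, uniformly in $\eps$.

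The only real difficulty is the construction of $\Sigma'$: orthogonality makes the naive even-in-$r$ extension of $\Gamma_0$ just $C^{1,1}$ at the seam $\Gamma_0\cap\pa\Omega$ because only the first odd $r$-derivative of the graph representation of $\Gamma_0$ is forced to vanish. Producing a genuinely $C^\infty$ and $R$-symmetric $\Sigma'$ that still satisfies $\Sigma'\cap\overline\Omega=\Gamma_0$ exactly requires careful local modification in Fermi coordinates and a partition-of-unity argument; this is precisely the technical content the author relegates to the Appendix.
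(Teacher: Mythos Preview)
Your argument for (ii) has a genuine gap. The claim that $R$-symmetry of $\Sigma'$ forces $\tilde d\circ R=\tilde d$ is false, because the Fermi reflection $R:(y,r)\mapsto(y,-r)$ is \emph{not} a Euclidean isometry when $\partial\Omega$ is curved. Signed Euclidean distance is invariant under an involution only when that involution preserves the ambient metric; here it does not. Concretely, take $\Omega$ the unit disk and $\Sigma'$ the $x$-axis (which is $R$-symmetric in Fermi coordinates for the circle). Then $\tilde d(x,y)=y$, so $\nabla\tilde d=(0,1)$ and $\nabla\tilde d\cdot\nu_{\partial\Omega}=\sin\theta$ at $(\cos\theta,\sin\theta)\in\partial\Omega$, which is nonzero for $\theta\neq 0,\pi$. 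Hence $g=\psi(\tilde d)$ violates (ii) at every boundary point near, but not on, the seam $\Gamma_0\cap\partial\Omega$.

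There is a second, related obstruction you partly acknowledge: an $R$-symmetric hypersurface with $\Sigma'\cap\overline\Omega=\Gamma_0$ is \emph{forced} to equal $\Gamma_0\cup R(\Gamma_0)$, and this union is only finitely differentiable at the seam (orthogonality kills the first odd normal derivative of the graph, not the higher ones). Any smoothing that preserves $R$-symmetry must modify the piece inside $\overline\Omega$, destroying $\Sigma'\cap\overline\Omega=\Gamma_0$. So the construction you defer to ``the Appendix'' is not merely technical --- in your framework it is impossible. The paper does \emph{not} build a reflected hypersurface at all: it constructs a vector field $X$ tangent to $\partial\Omega$ (with a carefully chosen correction term $X_0$), uses its flow $\Phi$ to define a function $f$ near $\Gamma_0\cap\partial\Omega$ whose level sets are the images of $\Gamma_0$ under $\Phi$, and checks by an ODE argument that $\nabla f\cdot\nu_{\partial\Omega}=0$ along $\partial\Omega$; $f$ is then blended with $d_U$ away from $\partial\Omega$ and composed with a cutoff $\phi$. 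Your computation for (iii) is correct in spirit and matches the paper's Step~3, but it rests on a $g$ that does not satisfy (ii).
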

With this $g$, we define all the relevant quantities (the proof of Lemma \ref{initial datum} is somewhat
technical and is deferred to Appendix). 
Then we have the following boundedness of the first variation of $V_t^{\gamma}$.
\begin{lemma} \label{main:total var fin}
For a.e. $(\gamma,t)\in\mathbb R\times(0,\infty)$, we have $\|\delta V_t^{\gamma}\|(\overline{\Omega})<\infty$
and for a.e. $\gamma\in\mathbb R$ and any $T>0$, we have
\begin{equation}\label{totalvar}
\int_0^T \|\delta V_t^{\gamma}\|(\overline{\Omega})\,dt<\infty.
\end{equation}
\end{lemma}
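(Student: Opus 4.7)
The plan is to reduce both assertions to a single double integral bound. By Fubini, it suffices to show that for every $T>0$,
\begin{equation*}
\int_0^T\!\!\int_{\mathbb R}\|\delta V_t^\gamma\|(\overline\Omega)\,d\gamma\,dt<\infty,
\end{equation*}
since then the inner time integral is finite for a.e.\ $\gamma$ (giving \eqref{totalvar}), and the integrand itself is finite for a.e.\ $(\gamma,t)$ (giving the first assertion). This sidesteps having to work $\gamma$-by-$\gamma$ and lets us exploit a single coarea identity.

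To estimate the inner integral for a.e.\ fixed $t$, I would use the first variation formula for $V_t^\gamma$ that Section~3 is designed to produce, namely, for $X\in C^1(\overline\Omega;\mathbb R^N)$,
\begin{equation*}
\delta V_t^\gamma(X)=-\!\!\int_{\Gamma_t^\gamma}\!H\,(X\cdot\nu)\,d\mathcal H^{N-1}+\!\!\int_{\Gamma_t^\gamma\cap\partial\Omega}\!X\cdot\nu_{\partial\Omega}\,d\mathcal H^{N-2}.
\end{equation*}
The boundary term appears because the Neumann condition $\nabla u\cdot\nu_{\partial\Omega}=0$ forces $\Gamma_t^\gamma$ to meet $\partial\Omega$ orthogonally, so the conormal of $\Gamma_t^\gamma$ along $\partial\Omega$ coincides with $\pm\nu_{\partial\Omega}$. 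Taking the supremum over $\|X\|_\infty\le 1$ yields
\begin{equation*}
\|\delta V_t^\gamma\|(\overline\Omega)\le \int_{\Gamma_t^\gamma}|H|\,d\mathcal H^{N-1}+\mathcal H^{N-2}(\Gamma_t^\gamma\cap\partial\Omega).
\end{equation*}

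Next I would integrate this in $\gamma$ and apply the coarea formula twice: to $u(\cdot,t)$ on $\Omega$ for the first term and to its trace on $\partial\Omega$ for the second, producing
\begin{equation*}
\int_{\mathbb R}\|\delta V_t^\gamma\|(\overline\Omega)\,d\gamma \le \int_\Omega |H|\,|\nabla u|\,dx+\int_{\partial\Omega}|\nabla^\tau u|\,d\mathcal H^{N-1},
\end{equation*}
where $\nabla^\tau$ denotes the tangential gradient on $\partial\Omega$. The Neumann condition identifies $|\nabla^\tau u|=|\nabla u|$ on $\partial\Omega$, and the definition of $H$ together with the PDE gives $|H|\,|\nabla u|\le|\partial_t u|$ a.e.\ in $\Omega$ (the analogous identity $H^\varepsilon\sqrt{|\nabla u^\varepsilon|^2+\varepsilon^2}=-\partial_t u^\varepsilon$ for the approximating problem passes to the limit). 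Using the uniform $L^\infty$ bounds $|\partial_t u|,|\nabla u|\le C_T$ from \eqref{bounds for level set solution} and integrating in time yields
\begin{equation*}
\int_0^T\!\!\int_{\mathbb R}\|\delta V_t^\gamma\|(\overline\Omega)\,d\gamma\,dt\le C_T\,T\bigl(|\Omega|+\mathcal H^{N-1}(\partial\Omega)\bigr),
\end{equation*}
which is the required bound.

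The main obstacle is the input from Section~3, rather than this Fubini/coarea routine: one must justify the displayed first variation formula for the merely Lipschitz viscosity solution $u$, in particular handling the boundary seam $\Gamma_t^\gamma\cap\partial\Omega$ and the fact that $\Gamma_t^\gamma$ is only $(N-1)$-rectifiable for a.e.\ $(\gamma,t)$. The natural route is to establish the analogous identity for each $u^\varepsilon$ (where it is classical and where orthogonality at $\partial\Omega$ is immediate), derive the uniform bound above at the $\varepsilon$-level, and then pass to $\varepsilon\to 0$ using weak varifold convergence of $V_t^{\gamma,\varepsilon}$ to $V_t^\gamma$ together with lower semicontinuity of the first variation; Fatou's lemma in $(\gamma,t)$ then transfers the uniform $\varepsilon$-bound to the limit.
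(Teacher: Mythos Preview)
Your argument rests on the displayed first variation formula
\[
\delta V_t^\gamma(X)=-\int_{\Gamma_t^\gamma}H\,(X\cdot\nu)\,d\mathcal H^{N-1}+\int_{\Gamma_t^\gamma\cap\partial\Omega}X\cdot\nu_{\partial\Omega}\,d\mathcal H^{N-2},
\]
but Section~3 does \emph{not} produce anything of this kind. Theorem~\ref{thm:1st var fml} is only proved for $X$ with $X\cdot\nu_{\partial\Omega}=0$ on $\partial\Omega$, with no boundary term at all; identifying the boundary contribution as an $\mathcal H^{N-2}$ integral along the seam $\Gamma_t^\gamma\cap\partial\Omega$ is a strictly stronger statement than Lemma~\ref{main:total var fin} itself (it presupposes exactly the structure of $\delta V_t^\gamma\lfloor_{\partial\Omega}$ that Theorem~\ref{main:boundary conditions} establishes \emph{after} this lemma). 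So the formula you want to take the supremum of is not available.

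Your proposed workaround through the $\varepsilon$-level also breaks. For the smooth level sets $\Gamma_t^{\gamma,\varepsilon}=\{u^\varepsilon=\gamma\}$ the classical first variation involves their \emph{actual} mean curvature $\di(\nabla u^\varepsilon/|\nabla u^\varepsilon|)$, not $H^\varepsilon=\di(\nabla u^\varepsilon/\sqrt{|\nabla u^\varepsilon|^2+\varepsilon^2})$. Hence the coarea integral $\int_\Omega |H_{\Gamma}^\varepsilon|\,|\nabla u^\varepsilon|\,dx$ is \emph{not} $\int_\Omega|\partial_tu^\varepsilon|\,dx$; the two differ by a term of size $\varepsilon^2(|\nabla u^\varepsilon|^2+\varepsilon^2)^{-1}|\nabla^2u^\varepsilon|$, for which no uniform-in-$\varepsilon$ bound is available. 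So the uniform estimate you need at the $\varepsilon$-level fails, and the subsequent ``varifold convergence $V_t^{\gamma,\varepsilon}\to V_t^\gamma$ for fixed $\gamma$'' that you invoke for lower semicontinuity is likewise never established (Lemma~\ref{limit} gives only aggregated weak-$*$ convergence, not level-by-level varifold convergence).

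The paper proceeds differently. It stays with the \emph{regularized} first variation identity (Lemma~\ref{lem:approx 1st var fml}), whose boundary term is the $\mathcal H^{N-1}$ integral $\int_{\partial\Omega}(X\cdot\nu_{\partial\Omega})\sqrt{|\nabla u^\varepsilon|^2+\varepsilon^2}\,d\mathcal H^{N-1}$---an object for which no $\mathcal H^{N-2}$ structure is needed. After passing to the limit against test functions $\phi(t)\eta(\gamma)$ and applying coarea, the limit boundary term is not identified; instead it is \emph{bounded} by plugging in a fixed $X_0$ with $X_0=\nu_{\partial\Omega}$ on $\partial\Omega$, which yields
\[
\|\delta V_t^\gamma\|(\overline\Omega)\le C\bigl(\mathcal H^{N-1}(\Gamma_t^\gamma)+\|H\|_{L^1(\Gamma_t^\gamma)}\bigr),
\]
and the right-hand side is controlled by Lemma~\ref{basic}(ii) and Lemma~\ref{lem:L2 estimate var}. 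The key point is that working with $H^\varepsilon$ and $\nu^\varepsilon$ throughout keeps all quantities uniformly bounded, and the boundary contribution is handled by a one-sided comparison rather than an exact formula.
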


By Lemma \ref{main:total var fin}, we may define the tangential component of the first variation $\delta V_t^{\gamma}$ on $\partial \Omega$ as in \cite[Definition 2.4]{MizuTone}:

\begin{definition}\label{1st var on bdry}
For a.e. $(\gamma,t)\in\mathbb R\times(0,\infty)$ such that $\|\delta V_t^{\gamma}\|(\overline{\Omega})<\infty$, set
\begin{equation}
\delta V_t^{\gamma}\lfloor_{\partial \Omega}^{\top}(X):=\delta V_t^{\gamma}\lfloor_{\partial \Omega}(X-(X\cdot \nu_{\partial \Omega})\nu_{\partial \Omega})
\quad\textrm{for}~X\in C(\partial \Omega;{\mathbb R}^N).
\end{equation}
\end{definition}
With this definition, we have
\begin{theorem}\label{main:boundary conditions}
For a.e. $(\gamma,t)\in\mathbb R\times(0,\infty)$, we have
\begin{itemize}
\item[(i)] $\delta V_t^{\gamma}\lfloor_{\Omega}=H\nu \|V_t^{\gamma}\|$,
\item[(ii)] $H\in L^2(\|V_t^{\gamma}\|)$,
\item[(iii)] $\delta V_t^{\gamma}\lfloor_{\partial \Omega}^{\top}=0$.
\end{itemize}
\end{theorem}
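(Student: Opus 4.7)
The plan is to derive the three assertions at the level of the smooth approximations $u^\varepsilon$ and then pass to the limit $\varepsilon\to 0$, using the $L^1$ curvature estimate and varifold convergence established in Section~3. For $\varepsilon\in (0,1)$ and for a.e.\ $(\gamma,t)$, Sard's theorem says $\Gamma_t^{\gamma,\varepsilon}:=\{u^\varepsilon(\cdot,t)=\gamma\}$ is a smooth hypersurface meeting $\partial\Omega$ transversally. Because the Neumann condition $\nabla u^\varepsilon\cdot\nu_{\partial\Omega}=0$ forces $\nu_\varepsilon:=\nabla u^\varepsilon/|\nabla u^\varepsilon|\perp\nu_{\partial\Omega}$, the vector $\nu_{\partial\Omega}$ lies in the tangent plane $\nu_\varepsilon^\perp$ to $\Gamma_t^{\gamma,\varepsilon}$, so $\nu_{\partial\Omega}$ is precisely the co-normal of $\Gamma_t^{\gamma,\varepsilon}$ along $\Gamma_t^{\gamma,\varepsilon}\cap\partial\Omega$. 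Standard integration by parts on the smooth manifold with boundary therefore gives, for every $X\in C^1(\overline\Omega;\mathbb R^N)$,
\begin{equation*}
\delta V_t^{\gamma,\varepsilon}(X)=-\int_{\Gamma_t^{\gamma,\varepsilon}} H_\varepsilon\nu_\varepsilon\cdot X\,d\mathcal H^{N-1}+\int_{\Gamma_t^{\gamma,\varepsilon}\cap\partial\Omega} X\cdot\nu_{\partial\Omega}\,d\mathcal H^{N-2},
\end{equation*}
where $H_\varepsilon$ is the mean curvature of the level set. In particular, the boundary integrand contains only the normal component of $X$, so $\delta V_t^{\gamma,\varepsilon}\lfloor_{\partial\Omega}^\top=0$ for every such $\varepsilon,\gamma,t$.

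Next, I would pass these three features to the limit. Parts (i) and (iii) are identities of measures, so the job is to show that, for a.e.\ $\gamma$ and a.e.\ $t$, $V_t^{\gamma,\varepsilon}\to V_t^\gamma$ as varifolds in $\overline\Omega$ and the vector measures $H_\varepsilon\nu_\varepsilon\|V_t^{\gamma,\varepsilon}\|$ converge weakly-$\ast$ to $H\nu\|V_t^\gamma\|$. The first convergence follows from the coarea formula for $u^\varepsilon$ together with local uniform convergence $u^\varepsilon\to u$ and the a.e.\ convergence $\nabla u^\varepsilon\to\nabla u$; the second convergence is precisely the content of the convergence results from Section~3, whose key ingredient is the localized $L^1$ curvature estimate of Evans--Spruck type (which here relies crucially on Lemma~\ref{initial datum}(iii) and on the Neumann condition being preserved uniformly in $\varepsilon$). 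Under this simultaneous convergence, the interior identity becomes $\delta V_t^\gamma\lfloor_\Omega = H\nu\|V_t^\gamma\|$, giving (i); and, testing against any $X\in C^1(\overline\Omega;\mathbb R^N)$ whose normal component vanishes on $\partial\Omega$, the $\varepsilon$-level boundary term is identically zero, hence so is its limit, yielding (iii) once the limiting first variation is shown to be a measure (which is exactly Lemma~\ref{main:total var fin}).

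For (ii), I would combine Brakke's inequality for the approximations (from Section~3) with the usual trick of testing against cut-offs that are $\equiv 1$ on a given compact subset and then exhausting; the resulting estimate controls $\int_0^T\int H_\varepsilon^2\,d\|V_t^{\gamma,\varepsilon}\|\,dt$ uniformly, and the coarea formula together with Fatou transfers the bound to the limit for a.e.\ $\gamma$, giving $H\in L^2(\|V_t^\gamma\|)$ for a.e.\ $(\gamma,t)$. The main obstacle I expect is the boundary analysis underlying (iii): one must ensure that the $\varepsilon$-level orthogonality $\nu_\varepsilon\perp\nu_{\partial\Omega}$ actually survives in a strong enough sense to conclude that no tangential boundary contribution is generated in the limit, despite the possible concentration of $\|V_t^{\gamma,\varepsilon}\|$ near $\partial\Omega$ where $|\nabla u^\varepsilon|$ may degenerate. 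This should be handled by exploiting that along $\partial\Omega$ the Neumann condition holds pointwise for every $\varepsilon$, so the vector measure $\nu_\varepsilon\|V_t^{\gamma,\varepsilon}\|$ has zero normal component on $\partial\Omega$ for every $\varepsilon$, and the limit inherits the same property; combined with $\|\delta V_t^\gamma\|(\overline\Omega)<\infty$, this rules out any tangential singular part on $\partial\Omega$ and closes the argument.
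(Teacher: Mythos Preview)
Your strategy diverges from the paper's and contains a real gap. You attempt to work directly with the level sets $\Gamma_t^{\gamma,\varepsilon}=\{u^\varepsilon(\cdot,t)=\gamma\}$ of the \emph{approximation}, establish the first variation formula there, and then pass to the limit $\varepsilon\to 0$ for fixed $(\gamma,t)$. The problem is the step ``$V_t^{\gamma,\varepsilon}\to V_t^\gamma$ as varifolds'' (and likewise for $H_\varepsilon\nu_\varepsilon\|V_t^{\gamma,\varepsilon}\|$). Nothing in Section~3 yields this: the convergences proved there (Lemma~\ref{limit}, Lemma~\ref{def of curvature}) are for the \emph{bulk} quantities $\sqrt{|\nabla u^\varepsilon|^2+\varepsilon^2}\,dx$, $\nu^\varepsilon$, $H^\varepsilon\sqrt{|\nabla u^\varepsilon|^2+\varepsilon^2}$, i.e.\ for integrals already averaged over all levels via the coarea formula. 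Uniform convergence $u^\varepsilon\to u$ and $L^2$ convergence of $\nu^\varepsilon$ on $\{|\nabla u|>0\}$ do not imply that for a.e.\ fixed $\gamma$ the individual level sets converge as varifolds; at the level of slices one has no control over drift or concentration. Your claim that ``the second convergence is precisely the content of the convergence results from Section~3'' misreads those results. Also note that your $H_\varepsilon$ (the genuine mean curvature of $\{u^\varepsilon=\gamma\}$) is not the paper's $H^\varepsilon=\mathrm{div}\big(\nabla u^\varepsilon/\sqrt{|\nabla u^\varepsilon|^2+\varepsilon^2}\big)$, so the $L^1$ estimate of Theorem~\ref{thm:L1 estimate} does not apply to it directly.

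The paper avoids this difficulty by reversing the order of operations: it first passes to the limit in the \emph{integrated} first variation identity (Lemma~\ref{lem:approx 1st var fml} $\to$ Theorem~\ref{thm:1st var fml}), obtaining an identity for $u$ itself valid for test fields $X$ with $X\cdot\nu_{\partial\Omega}=0$, and only \emph{then} slices with the coarea formula applied to the Lipschitz function $u$. This yields (i) immediately, and (ii) comes from Theorem~\ref{thm:L2 estimate} plus coarea (Lemma~\ref{lem:L2 estimate var})---no Brakke inequality or cut-off trick is needed. For (iii), the paper simply observes that any tangential extension $Y$ of $X-(X\cdot\nu_{\partial\Omega})\nu_{\partial\Omega}$ is admissible in Theorem~\ref{thm:1st var fml}; after coarea this gives $\delta V_t^\gamma(Y)=\int H\nu\cdot Y\,d\|V_t^\gamma\|$, and since $\mathcal H^{N-1}(\Gamma_t^\gamma\cap\partial\Omega)=0$ (Lemma~\ref{basic}(iii)) the right-hand side equals $\delta V_t^\gamma\lfloor_\Omega(Y)$ by (i), forcing $\delta V_t^\gamma\lfloor_{\partial\Omega}(Y)=0$. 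Your intuition that the Neumann condition kills the tangential boundary term is correct, but the mechanism is ``integrate, then slice'' rather than ``slice, then pass to the limit''.
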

The properties (i) and (ii) show that the first variation is absolutely continuous with respect to $\|V_t^\gamma\|$
inside $\Omega$ with $L^2$ generalized mean curvature, and (iii) shows that the first variation on $\partial\Omega$
is parallel to $\nu_{\partial\Omega}$ as a measure. The latter may be described that the $V_t^\gamma$ satisfies
the Neumann boundary condition in a generalized sense. 
Finally, $V_t^{\gamma}$ satisfies the following integral inequality called Brakke's inequality.

\begin{theorem}\label{main:Brakke ineq}
For a.e. $\gamma\in {\mathbb R}$, we have for a.e. $0\leq t_1<t_2<\infty$ and for any 
non-negative $\phi \in C^1(\overline{\Omega}\times [0,\infty))$ the inequality
\begin{equation}\label{bineq}
\int_{\overline{\Omega}}\phi(\cdot,t_2)\,d\|V_{t_2}^{\gamma}\|-\int_{\overline{\Omega}}\phi(\cdot,t_1)\,d\|V_{t_1}^{\gamma}\|\leq \int_{t_1}^{t_2}\int_{\overline{\Omega}}\partial_t\phi-H(\nu\cdot \nabla \phi)-\phi H^2\,d\|V_t^{\gamma}\|dt.
\end{equation}
\end{theorem}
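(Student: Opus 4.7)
The plan is to derive an exact approximate Brakke identity at the $\varepsilon$-level, pass to the limit using the convergence and curvature estimates established in Section 3, and finally disintegrate by co-area to obtain an inequality for almost every level $\gamma$.

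First, set $E^{\varepsilon}(t,\phi):=\int_{\overline{\Omega}}\phi(x,t)\sqrt{|\nabla u^{\varepsilon}|^2+\varepsilon^2}\,dx$ and differentiate in $t$. After one integration by parts in the space derivative term, the boundary integrand contains the factor $\nu^{\varepsilon}\cdot\nu_{\partial\Omega}$ with $\nu^{\varepsilon}:=\nabla u^{\varepsilon}/\sqrt{|\nabla u^{\varepsilon}|^2+\varepsilon^2}$, which vanishes by the Neumann condition $\nabla u^{\varepsilon}\cdot\nu_{\partial\Omega}=0$. Substituting the PDE in the form $\partial_t u^{\varepsilon}=H^{\varepsilon}\sqrt{|\nabla u^{\varepsilon}|^2+\varepsilon^2}$ with $H^{\varepsilon}:=\di(\nu^{\varepsilon})$, one obtains the exact identity
\[
\frac{d}{dt}E^{\varepsilon}(t,\phi)=\int_{\overline{\Omega}}\bigl[\partial_t\phi-H^{\varepsilon}(\nu^{\varepsilon}\cdot\nabla\phi)-\phi(H^{\varepsilon})^2\bigr]\sqrt{|\nabla u^{\varepsilon}|^2+\varepsilon^2}\,dx.
\]
Crucially this holds for every $\phi\in C^1(\overline{\Omega}\times[0,\infty))$, without the auxiliary condition $\nabla\phi\cdot\nu_{\partial\Omega}=0$ imposed in \cite{MizuTone,Edelen}; the Neumann condition used is the one satisfied by the \emph{solution}, not the test function.

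Second, integrate this identity over $[t_1,t_2]$ and pass to the limit $\varepsilon\to 0$. The Radon measures $\sqrt{|\nabla u^{\varepsilon}|^2+\varepsilon^2}\,dx$ converge to $|\nabla u|\,dx$ and the vector measures $H^{\varepsilon}\nu^{\varepsilon}\sqrt{|\nabla u^{\varepsilon}|^2+\varepsilon^2}\,dx$ converge to $H\nu|\nabla u|\,dx$ by the convergence results of Section 3. For the quadratic term, the $L^2$ curvature bound gives weak convergence of $H^{\varepsilon}\sqrt{\cdots}$ in $L^2$ of the limit measure, and lower semicontinuity (Fatou) degrades the equality to an inequality, producing
\[
\int_{\overline{\Omega}}\phi(\cdot,t_2)|\nabla u|\,dx-\int_{\overline{\Omega}}\phi(\cdot,t_1)|\nabla u|\,dx\leq\int_{t_1}^{t_2}\int_{\overline{\Omega}}\bigl[\partial_t\phi-H(\nu\cdot\nabla\phi)-\phi H^2\bigr]|\nabla u|\,dx\,dt
\]
for a.e.\ $0\le t_1<t_2$.

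Third, apply the co-area formula $|\nabla u(x,t)|\,dx=\int d\gamma\,d\Ha^{N-1}\lfloor_{\Gamma_t^{\gamma}}$ (and its time-integrated version) to rewrite both sides as double integrals over $(\gamma,x)$. Choose a countable family $\{\phi_k\}$ dense in the nonnegative cone of $C^1(\overline{\Omega}\times[0,\infty))$; for each $k$ there is a null set $N_k\subset\R$ outside which the inequality with test function $\phi_k$ holds for a.e.\ $t_1<t_2$. Let $N:=\bigcup_k N_k$. For $\gamma\notin N$ the unit-density property (Lemma 6.1 of \cite{Evans4}) identifies $\int_{\Gamma_t^{\gamma}}\phi\,d\Ha^{N-1}=\int\phi\,d\|V_t^{\gamma}\|$, and density in $\phi$ yields \eqref{bineq} for every nonnegative $\phi\in C^1(\overline{\Omega}\times[0,\infty))$.

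The main obstacle is the limit passage in the quadratic term $\phi(H^{\varepsilon})^2\sqrt{|\nabla u^{\varepsilon}|^2+\varepsilon^2}$: one needs the curvature estimates of Section 3 to guarantee enough uniform integrability so that the weak $L^2$-convergence of $H^{\varepsilon}\sqrt{\cdots}^{1/2}$ survives localization near $\partial\Omega$, and lower semicontinuity gives the correct sign. A secondary subtlety is extracting a single $\gamma$-null set valid for all test functions and all pairs $(t_1,t_2)$; this is handled by the density argument together with the fact that $t\mapsto\int\phi\,d\|V_t^{\gamma}\|$ is of bounded variation (inherited from the integrated identity), so its precise representative is defined for a.e.\ $t$.
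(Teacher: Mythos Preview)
Your first two steps---deriving the exact $\varepsilon$-level Brakke identity and passing to the limit---are correct and reproduce Theorem~\ref{thm:Brakke ineq}. The genuine gap is in your third step, the disintegration by coarea. After applying the coarea formula to the integrated inequality, for each fixed $\phi$, $t_1$, $t_2$ you obtain only
\[
\int_{\mathbb R}\Bigl[\mathrm{LHS}(\gamma)-\mathrm{RHS}(\gamma)\Bigr]\,d\gamma\le 0,
\]
and from an inequality on the $\gamma$-integral you cannot conclude that $\mathrm{LHS}(\gamma)\le\mathrm{RHS}(\gamma)$ for a.e.\ $\gamma$. Your sentence ``for each $k$ there is a null set $N_k\subset\R$ outside which the inequality with test function $\phi_k$ holds'' asserts precisely what still needs to be proved. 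The test function $\phi$ depends only on $(x,t)$, so varying it gives no way to localize in the level variable $\gamma$; a density argument in $\phi$ does not touch this issue.

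The paper closes this gap by exploiting the geometric (relabeling) invariance of the level set equation: for any $\eta\in C^{\infty}(\mathbb R)$ with $\eta'>0$, the function $\tilde u:=\eta(u)$ is again the viscosity solution with initial data $\eta(g)$. Applying Theorem~\ref{thm:Brakke ineq} to $\tilde u$ yields the integrated Brakke inequality with $|\nabla u|$ replaced by $\eta'(u)|\nabla u|$; after coarea this inserts an arbitrary positive weight $\eta'(\gamma)$ in the $\gamma$-integral:
\[
\int_{\mathbb R}\eta'(\gamma)\Bigl[\mathrm{LHS}(\gamma)-\mathrm{RHS}(\gamma)\Bigr]\,d\gamma\le 0.
\]
Since $\eta'>0$ is arbitrary, the per-level inequality follows for a.e.\ $\gamma$. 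This relabeling trick is the essential ingredient missing from your argument; without it the passage from the ``stacked'' inequality to the inequality on individual level sets does not go through.
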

As stated in Section 1, it is interesting to observe that we do not need to assume $\nabla\phi\cdot\nu_{\partial\Omega}=0$ on $\partial\Omega$, which was additionally required in \cite{Edelen,MizuTone}. We also note that for a.e. $\gamma$, $V_t^{\gamma}$ has the semi-decreasing property, namely, the mapping $t\mapsto \|V_t^{\gamma}\|(\phi)-C_{\phi}t$ is non-increasing for any non-negative $\phi\in C^2(\overline{\Omega})$. From this property, we may define $\|V_t^{\gamma}\|$ for all $t\in [0,\infty)$ so that $t\mapsto \|V_t^{\gamma}\|$ is left-continuous, and one may prove Theorem \ref{main:Brakke ineq} is still valid for all $0\leq t_1<t_2<\infty$.

\section{\texorpdfstring{$L^1$ estimate on the mean curvature and geometric formulae}{L1 estimate on the mean curvature and geometric formulae}}
We first show a crucial $L^1$ estimate for the approximate mean curvature $H^\varepsilon$. This estimate plays an important role in proceeding convergence arguments based on compensated compactness. In the latter part of Section 3, we establish the level set version of the first variation formula and Brakke's inequality. 

\begin{definition}
We define
\begin{equation}
\nu^{\varepsilon}:=\frac{\nabla u^{\varepsilon}}{\sqrt{|\nabla u^{\varepsilon}|^2+\varepsilon^2}},\quad
H^{\varepsilon}:=\textrm{div}(\nu^{\varepsilon}),\quad
A^{\varepsilon}:=I-\nu^{\varepsilon}\otimes \nu^{\varepsilon}.
\end{equation}
\end{definition}

\begin{theorem} \label{thm:L1 estimate}
We have
\begin{equation} \label{L1 estimate}
\sup_{\substack{0<\varepsilon<1\\t\geq 0}}\int_{\Omega}|H^{\varepsilon}(x,t)|\,dx<\infty.
\end{equation}
Moreover, for each $\varepsilon\in (0,1)$, the mapping $t\mapsto \int_{\Omega}|H^{\varepsilon}(x,t)|\,dx$ is non-increasing.
\end{theorem}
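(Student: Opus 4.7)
The plan is to prove the monotonicity assertion first; combined with Lemma \ref{initial datum}(iii), which bounds $\int_\Omega|H^{\varepsilon}(\cdot,0)|\,dx=\int_\Omega|\mathrm{div}(\nabla g/\sqrt{|\nabla g|^2+\varepsilon^2})|\,dx$ uniformly in $\varepsilon$, this yields \eqref{L1 estimate} at once. The crux is to identify a divergence-form evolution for $\partial_t H^{\varepsilon}$ that is compatible with the Neumann condition.

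Set $w:=\partial_t u^{\varepsilon}$ and $a:=\sqrt{|\nabla u^{\varepsilon}|^2+\varepsilon^2}$, so that $\nu^{\varepsilon}=\nabla u^{\varepsilon}/a$ and $H^{\varepsilon}=w/a$. Differentiating \eqref{approx eq} in $t$ gives the linearized PDE
\[
\partial_t w = A^{\varepsilon}_{ij}\partial_{ij}w + b^{k}\partial_k w \quad\text{in}~\Omega,\qquad \nabla w\cdot\nu_{\partial\Omega}=0 \quad\text{on}~\partial\Omega,
\]
the boundary condition coming from differentiating $\nabla u^{\varepsilon}\cdot\nu_{\partial\Omega}=0$ in $t$. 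A short explicit calculation produces $b^{k}=-\tfrac{2}{a}A^{\varepsilon}_{kl}\partial_l a$ and $\partial_i A^{\varepsilon}_{ij}=-H^{\varepsilon}\nu^{\varepsilon}_j-A^{\varepsilon}_{jl}\partial_l a/a$, while $\partial_t a=\nu^{\varepsilon}\cdot\nabla w$. Patiently combining these with $\partial_t H^{\varepsilon}=\partial_t w/a-H^{\varepsilon}\partial_t a/a$ yields the key identity
\[
\partial_t H^{\varepsilon}=\mathrm{div}\!\left(\frac{A^{\varepsilon}\nabla w}{a}\right).
\]

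I would then approximate $|\cdot|$ by the smooth convex function $\phi_\eta(s):=\sqrt{s^2+\eta^2}$, test the above identity against $\phi_\eta'(H^{\varepsilon})$, and integrate over $\Omega$. The boundary contribution
\[
\int_{\partial\Omega}\phi_\eta'(H^{\varepsilon})\,\frac{(A^{\varepsilon}\nabla w)\cdot\nu_{\partial\Omega}}{a}\,d\mathcal{H}^{N-1}
\]
vanishes because $\nu^{\varepsilon}\cdot\nu_{\partial\Omega}=0$ on $\partial\Omega$ implies $A^{\varepsilon}\nu_{\partial\Omega}=\nu_{\partial\Omega}$ there, whence $(A^{\varepsilon}\nabla w)\cdot\nu_{\partial\Omega}=\nabla w\cdot\nu_{\partial\Omega}=0$. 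The interior term equals $-\int_\Omega \phi_\eta''(H^{\varepsilon})\,\nabla H^{\varepsilon}\cdot(A^{\varepsilon}\nabla w)/a\,dx$; substituting $\nabla w=H^{\varepsilon}\nabla a+a\nabla H^{\varepsilon}$ and completing the square in the positive semidefinite form $A^{\varepsilon}$ produces
\[
\frac{d}{dt}\int_\Omega\phi_\eta(H^{\varepsilon})\,dx \leq \int_\Omega \phi_\eta''(H^{\varepsilon})\,\frac{(H^{\varepsilon})^2}{4a^2}\,A^{\varepsilon}_{ij}\partial_i a\,\partial_j a\,dx.
\]

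To close, I would use the elementary bound $\phi_\eta''(s)s^2\leq\eta$ together with the observation that, for each fixed $\varepsilon$, the quantity $A^{\varepsilon}_{ij}\partial_i a\,\partial_j a/a^2$ is bounded in $L^\infty([0,T]\times\Omega)$ by smoothness of $u^{\varepsilon}$ and the lower bound $a\geq \varepsilon$. Integrating in $t$ and letting $\eta\to 0$ (using $\phi_\eta\to|\cdot|$ uniformly on bounded sets) proves that $t\mapsto \int_\Omega|H^{\varepsilon}(\cdot,t)|\,dx$ is non-increasing; together with the $t=0$ bound from Lemma \ref{initial datum}(iii), this yields \eqref{L1 estimate}. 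The principal obstacle is establishing the divergence-form identity for $\partial_t H^{\varepsilon}$: starting from the non-divergence equation, one must manipulate several expressions involving $A^{\varepsilon}$, $\nu^{\varepsilon}$, and $a$ so that many terms cancel. Once secured, the identity transparently exposes the compatibility $A^{\varepsilon}\nu_{\partial\Omega}=\nu_{\partial\Omega}$ on $\partial\Omega$, which is the essential new ingredient beyond the Evans--Spruck analysis in $\mathbb{R}^N$ needed to handle the Neumann setting.
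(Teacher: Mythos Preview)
Your proof is correct and follows essentially the same approach as the paper's. One remark: the divergence identity comes for free from $H^{\varepsilon}=\mathrm{div}(\nu^{\varepsilon})$, since then $\partial_t H^{\varepsilon}=\mathrm{div}(\partial_t\nu^{\varepsilon})$ with $\partial_t\nu^{\varepsilon}=A^{\varepsilon}\nabla w/a$ computed directly---no need to pass through the non-divergence linearization; the paper proceeds this way and disposes of the error term using approximants $\eta_\delta$ with $\mathrm{spt}\,\eta_\delta''\subset[-\delta,\delta]$, but your completing-the-square argument with $\phi_\eta(s)=\sqrt{s^2+\eta^2}$ is equally valid.
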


\begin{proof}
Let $\eta\in C^{\infty}({\mathbb R})$ be convex and $|\eta'|\leq 1$. For any $T>0$ and $t\in [0,T]$, we compute
\begin{equation}
\begin{aligned}
\frac{d}{dt}\int_{\Omega}\eta(H^{\varepsilon})\,dx&=\int_{\Omega}\eta'(H^{\varepsilon})\partial_t H^{\varepsilon}\,dx=\int_{\Omega}\eta'(H^{\varepsilon})\textrm{div}(\partial_t \nu^{\varepsilon})\,dx\\
&=\int_{\partial\Omega}\eta'(H^{\varepsilon})(\partial_t \nu^{\varepsilon}\cdot \nu_{\partial \Omega})\,d\mathcal{H}^{N-1}-\int_{\Omega}\eta''(H^{\varepsilon})\nabla H^{\varepsilon}\cdot \partial_t \nu^{\varepsilon}\,dx.
\end{aligned}
\end{equation}
From the boundary conditions of $u^{\varepsilon}$, we see $\partial_t \nu^{\varepsilon}\cdot \nu_{\partial \Omega}=\partial_t(\nu^{\varepsilon}\cdot \nu_{\partial \Omega})=0$ on $\partial \Omega$. On the other hand, since $\eta$ is convex and $A^{\varepsilon}$ is positive semi-definite,
\begin{equation}
\begin{aligned}
\eta''(H^{\varepsilon})\nabla H^{\varepsilon}\cdot \partial_t \nu^{\varepsilon}&=
\eta''(H^{\varepsilon})\nabla H^{\varepsilon}\cdot \left(A^{\varepsilon}\nabla H^{\varepsilon}+H^{\varepsilon}A^{\varepsilon}\frac{\nabla (\sqrt{|\nabla u^{\varepsilon}|^2+\varepsilon^2})}{\sqrt{|\nabla u^{\varepsilon}|^2+\varepsilon^2}}\right)\\
&\geq \eta''(H^{\varepsilon})H^{\varepsilon}\nabla H^{\varepsilon}\cdot \left(A^{\varepsilon}\frac{\nabla (\sqrt{|\nabla u^{\varepsilon}|^2+\varepsilon^2})}{\sqrt{|\nabla u^{\varepsilon}|^2+\varepsilon^2}}\right)\\
&\geq -C_{\varepsilon,T}\eta''(H^{\varepsilon})|H^{\varepsilon}||\nabla H^{\varepsilon}|,
\end{aligned}
\end{equation}
where $C_{\varepsilon,T}>0$ is a constant. Therefore,
\begin{equation}
\frac{d}{dt}\int_{\Omega}\eta(H^{\varepsilon})\,dx\leq C_{\varepsilon,T}\int_{\Omega}\eta''(H^{\varepsilon})|H^{\varepsilon}||\nabla H^{\varepsilon}|\,dx.
\end{equation}
Integrating in $t$, we have
\begin{equation}
\left[\int_{\Omega}\eta(H^{\varepsilon})\,dx\right]_{t=t_1}^{t_2}\leq C_{\varepsilon,T}\int_{t_1}^{t_2}\int_{\Omega}\eta''(H^{\varepsilon})|H^{\varepsilon}||\nabla H^{\varepsilon}|\,dxdt
\end{equation}
for any $0\leq t_1<t_2<T$. For each $\delta>0$, choose $\eta=\eta_{\delta}$ such that
\[
\begin{cases}
\eta_{\delta}(s)\rightarrow |s|\quad\textrm{locally uniformly in}~{\mathbb R}~\textrm{as}~\delta\to 0,\\
\spt~\eta_{\delta}''\subseteq [-\delta,\delta],\quad
0\leq \eta_{\delta}''\leq \frac{C}{\delta},
\end{cases}
\]
where $C>0$ is a constant independent of $\delta>0$. Then
\begin{equation}
\begin{aligned}
\left[\int_{\Omega}\eta_{\delta}(H^{\varepsilon})\,dx\right]_{t=t_1}^{t_2}&\leq C_{\varepsilon,T}\int_{t_1}^{t_2}\int_{\{|H^{\varepsilon}|\leq\delta\}}\eta_{\delta}''(H^{\varepsilon})|H^{\varepsilon}||\nabla H^{\varepsilon}|\,dxdt\\
&\leq C_{\varepsilon,T}\int_{t_1}^{t_2}\int_{\{|H^{\varepsilon}|\leq \delta\}}|\nabla H^{\varepsilon}|\,dxdt.
\end{aligned}
\end{equation}
Letting $\delta\to 0$, we obtain
\[
\left[\int_{\Omega}|H^{\varepsilon}|\,dx\right]_{t=t_1}^{t_2}\leq 0.
\]
Since the above integral is bounded at $t=0$ due to Lemma \ref{initial datum}(iii), this concludes the proof.
\end{proof}

The estimate \eqref{L1 estimate} allows us to take limits of various geometric quantities as $\varepsilon \to 0$. As a consequence, one may obtain the following.

\begin{lemma} \label{limit}
We have the following convergence as $\varepsilon \to 0$ for each time $t\geq 0$:
\begin{itemize}
\item[(i)] $\sqrt{|\nabla u^{\varepsilon}|^2+\varepsilon^2}\weakstar |\nabla u|$ in $L^{\infty}(\Omega)$,
\item[(ii)] $\nu^{\varepsilon}\rightarrow \nu=\frac{\nabla u}{|\nabla u|}$ strongly in $L^2(\{|\nabla u|>0\})$.
\end{itemize}
\end{lemma}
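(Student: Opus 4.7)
The plan is to extract weak-$*$ limits via Banach--Alaoglu, identify them through a div-curl style integration by parts that leverages the $L^1$ bound on $H^\varepsilon$ from Theorem~\ref{thm:L1 estimate} together with the uniform convergence $u^\varepsilon \to u$, and finally upgrade the weak convergence of $\nu^\varepsilon$ to strong $L^2$ convergence by the standard norm-plus-weak argument. Fix $t\geq 0$. By \eqref{bounds for level set solution}, $w^\varepsilon := \sqrt{|\nabla u^\varepsilon|^2 + \varepsilon^2}$ and $\nu^\varepsilon$ are uniformly bounded in $L^\infty(\Omega)$, so along a subsequence $w^\varepsilon \weakstar w$ and $\nu^\varepsilon \weakstar \nu^*$ in $L^\infty(\Omega)$; the uniform convergence $u^\varepsilon \to u$ also gives $\nabla u^\varepsilon \weakstar \nabla u$, whence $|\nabla u|\leq w$ a.e.\ by weak lower semicontinuity. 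Theorem~\ref{thm:L1 estimate} yields $\sup_\varepsilon \|H^\varepsilon(\cdot,t)\|_{L^1(\Omega)}<\infty$, so after a further subsequence $H^\varepsilon\,\d x \weakstar \mu$ in $\mathcal{M}(\overline{\Omega})$, with $\di\nu^* = \mu$ as distributions on $\Omega$.

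The crucial identification is $w = \nu^*\cdot\nabla u$ a.e. For $\phi\in C_c^\infty(\Omega)$, integration by parts with no boundary term gives
\begin{equation*}
\int_\Omega \phi\,\nu^\varepsilon\cdot\nabla u^\varepsilon\,\d x \;=\; -\int_\Omega u^\varepsilon(\nabla\phi\cdot\nu^\varepsilon)\,\d x \;-\; \int_\Omega u^\varepsilon\phi H^\varepsilon\,\d x.
\end{equation*}
The first term on the right passes to $-\int u(\nabla\phi\cdot\nu^*)\,\d x$ by strong-times-weak-$*$, and the second to $-\int u\phi\,\d\mu$ since $u^\varepsilon\phi \to u\phi$ uniformly on $\overline{\Omega}$. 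Mollifying $\nu^*$ and using $\di\nu^*=\mu$ recognises this limit as $\int\phi\,\nu^*\cdot\nabla u\,\d x$. On the left, $\nu^\varepsilon\cdot\nabla u^\varepsilon = w^\varepsilon - \varepsilon^2/w^\varepsilon$ and $\varepsilon^2/w^\varepsilon\leq\varepsilon$, so its weak-$*$ limit is $w$. Density of $C_c^\infty(\Omega)$ in $L^1(\Omega)$ then gives $w = \nu^*\cdot\nabla u$ a.e.

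Combining with $|\nu^*|\leq 1$ (preserved under weak-$*$ limits) and $|\nabla u|\leq w$ yields the chain $|\nabla u|\leq w = \nu^*\cdot\nabla u\leq |\nu^*||\nabla u|\leq |\nabla u|$, forcing equality throughout. Thus $w=|\nabla u|$ a.e., which proves (i) along the subsequence; uniqueness of the limit upgrades this to convergence of the whole sequence. Moreover equality in the Cauchy--Schwarz step forces $\nu^* = \nabla u/|\nabla u| = \nu$ a.e.\ on $\{|\nabla u|>0\}$, so $\nu^\varepsilon\weak\nu$ weakly in $L^2(\{|\nabla u|>0\})$. To strengthen this to strong $L^2$ convergence, I would observe $|\nu^\varepsilon|\leq 1 = |\nu|$ on this set, hence $\limsup\int_{\{|\nabla u|>0\}}|\nu^\varepsilon|^2\,\d x \leq |\{|\nabla u|>0\}|$; meanwhile $|\nu^\varepsilon|^2\geq 2\nu^\varepsilon\cdot\nu - |\nu|^2$ together with $\int \nu^\varepsilon\cdot\nu\,\d x\to \int|\nu|^2\,\d x$ supplies the matching $\liminf$. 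Then $\|\nu^\varepsilon-\nu\|_{L^2}^2 = \|\nu^\varepsilon\|_{L^2}^2 - 2\langle\nu^\varepsilon,\nu\rangle + \|\nu\|_{L^2}^2\to 0$, as required.

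The main obstacle is the div-curl type identification in paragraph two: the classical Murat--Tartar lemma would require $\di\nu^\varepsilon$ to be compact in $H^{-1}$, whereas Theorem~\ref{thm:L1 estimate} only provides an $L^1$ bound, which does not embed into $H^{-1}$ in dimensions $N\geq 2$. The workaround is the direct integration by parts above, which succeeds because (a) the test functions are compactly supported in $\Omega$, sidestepping any Neumann boundary contribution, and (b) the uniform convergence $u^\varepsilon\to u$ is strong enough to pair with the merely measure-valued weak-$*$ limit $\mu$ of $H^\varepsilon\,\d x$.
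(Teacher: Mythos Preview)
Your argument is correct and is essentially the compensated compactness argument of \cite[Theorems~3.1--3.3]{Evans4} that the paper invokes: extract weak-$*$ limits, use the uniform $L^1$ bound on $H^\varepsilon$ together with the uniform convergence $u^\varepsilon\to u$ to identify the product limit $\nu^\varepsilon\cdot\nabla u^\varepsilon\weakstar\nu^*\cdot\nabla u$ via integration by parts against $\phi\in C_c^\infty(\Omega)$, then run the Cauchy--Schwarz chain and the norm-convergence upgrade. Your final paragraph correctly isolates the key point---that the $L^1$ bound on $\di\nu^\varepsilon$ is too weak for the Murat--Tartar lemma, and the rescue comes from pairing the measure limit against the \emph{uniformly} convergent $u^\varepsilon\phi$---which is precisely the mechanism in \cite{Evans4}; restricting to compactly supported test functions makes the Neumann boundary irrelevant here, so the proof carries over verbatim from the $\mathbb{R}^N$ case.
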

\begin{proof} The proof is identical to \cite[Theorem 3.1-3.3]{Evans4}. The argument is based on the compensated compactness and the estimate \eqref{L1 estimate}.
\end{proof}
We also prove an $L^2$ estimate for the approximate mean curvature. 
\begin{lemma} \label{lem:approx L2 estimate}
For any $T>0$,
\begin{equation} \label{approx L2 estimate}
\sup_{0<\varepsilon<1}\int_0^T\int_{\Omega}|H^{\varepsilon}|^2\sqrt{|\nabla u^{\varepsilon}|^2+\varepsilon^2}\,dxdt<\infty.
\end{equation}
\end{lemma}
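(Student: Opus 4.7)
The plan is to exploit the gradient-flow structure of the approximate equation \eqref{approx eq}: the evolution may be viewed as $L^2$ gradient descent (against $\partial_t u^{\varepsilon}/\sqrt{|\nabla u^{\varepsilon}|^2+\varepsilon^2}$) for the regularized area functional
\[
\mathcal{E}^{\varepsilon}(u):=\int_{\Omega}\sqrt{|\nabla u|^2+\varepsilon^2}\,dx,
\]
and the desired $L^2$-type bound should pop out as the dissipation identity for $\mathcal{E}^{\varepsilon}$.

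Concretely, I would differentiate $\mathcal{E}^{\varepsilon}(u^{\varepsilon}(\cdot,t))$ in time and get
\[
\frac{d}{dt}\int_{\Omega}\sqrt{|\nabla u^{\varepsilon}|^2+\varepsilon^2}\,dx=\int_{\Omega}\nu^{\varepsilon}\cdot\nabla(\partial_t u^{\varepsilon})\,dx.
\]
Then I would integrate by parts. The boundary term is $\int_{\partial\Omega}(\nu^{\varepsilon}\cdot\nu_{\partial\Omega})\,\partial_t u^{\varepsilon}\,d\mathcal{H}^{N-1}$, which vanishes identically because the Neumann condition $\nabla u^{\varepsilon}\cdot\nu_{\partial\Omega}=0$ in \eqref{approx eq} forces $\nu^{\varepsilon}\cdot\nu_{\partial\Omega}=0$ on $\partial\Omega$. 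What remains is
\[
\frac{d}{dt}\int_{\Omega}\sqrt{|\nabla u^{\varepsilon}|^2+\varepsilon^2}\,dx=-\int_{\Omega}H^{\varepsilon}\,\partial_t u^{\varepsilon}\,dx=-\int_{\Omega}|H^{\varepsilon}|^2\sqrt{|\nabla u^{\varepsilon}|^2+\varepsilon^2}\,dx,
\]
where in the last step I use the PDE $\partial_t u^{\varepsilon}=\sqrt{|\nabla u^{\varepsilon}|^2+\varepsilon^2}\,H^{\varepsilon}$.

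Integrating from $0$ to $T$ then gives the explicit dissipation identity
\[
\int_0^T\!\!\int_{\Omega}|H^{\varepsilon}|^2\sqrt{|\nabla u^{\varepsilon}|^2+\varepsilon^2}\,dx\,dt=\int_{\Omega}\sqrt{|\nabla g|^2+\varepsilon^2}\,dx-\int_{\Omega}\sqrt{|\nabla u^{\varepsilon}(\cdot,T)|^2+\varepsilon^2}\,dx,
\]
which, since the last term is non-negative and $\varepsilon<1$, is bounded from above by $\int_{\Omega}(|\nabla g|+1)\,dx$, a finite constant independent of $\varepsilon$ by the smoothness and boundedness of $g$ (Lemma \ref{initial datum}) together with $|\Omega|<\infty$.

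I do not expect a real obstacle here: the computation is entirely formal and the ingredients (smoothness of $u^{\varepsilon}$ on $\overline{\Omega}\times[0,\infty)$, the Neumann condition, and $L^{\infty}$ bounds on $\nabla g$) are already in place. The only mild point to watch is the justification of the integration by parts, which is immediate from $u^{\varepsilon}\in C^{\infty}(\overline{\Omega}\times[0,\infty))$ together with \eqref{bounds for level set solution}.
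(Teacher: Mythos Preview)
Your proposal is correct and follows essentially the same argument as the paper: differentiate the regularized area $\int_{\Omega}\sqrt{|\nabla u^{\varepsilon}|^2+\varepsilon^2}\,dx$, integrate by parts using $\nu^{\varepsilon}\cdot\nu_{\partial\Omega}=0$, substitute the PDE, and integrate in time to obtain the dissipation identity. The only cosmetic difference is that you make the final $\varepsilon$-independent bound $\int_{\Omega}(|\nabla g|+1)\,dx$ explicit, whereas the paper simply remarks that the right-hand side is bounded uniformly in $\varepsilon$.
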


\begin{proof} Recalling that $\nu^{\varepsilon}\cdot \nu_{\partial \Omega}=0$ on $\partial \Omega$, we compute
\[
\begin{aligned}
\frac{d}{dt}\int_{\Omega}\sqrt{|\nabla u^{\varepsilon}|^2+\varepsilon^2}\,dx&=\int_{\Omega}\nu^{\varepsilon}\cdot \nabla (\partial_tu^{\varepsilon})\,dx=\int_{\partial \Omega}\partial_t u^{\varepsilon}(\nu^{\varepsilon}\cdot \nu_{\partial \Omega})\,d\mathcal{H}^{N-1}-\int_{\Omega}H^{\varepsilon}\partial_tu^{\varepsilon}\,dx\\
&=-\int_{\Omega}|H^{\varepsilon}|^2\sqrt{|\nabla u^{\varepsilon}|^2+\varepsilon^2}\,dx.
\end{aligned}
\]
Integrating in $t$, we have
\begin{equation} \label{difference approx area}
\int_{\Omega}\sqrt{|\nabla u^\varepsilon(\cdot,T)|^2+\varepsilon^2}\,dx+
\int_0^T\int_{\Omega}|H^{\varepsilon}|^2\sqrt{|\nabla u^{\varepsilon}|^2+\varepsilon^2}\,dxdt
=\int_{\Omega}\sqrt{|\nabla g|^2+\varepsilon^2}\,dx.
\end{equation}
Since the right-hand side may be bounded independent of $\varepsilon\in(0,1)$, we conclude the proof. 
\end{proof}
The proof of the following claim is identical to \cite[Lemma 4.2]{Evans4}.
\begin{lemma} \label{def of curvature}
$\partial_t u=0$ a.e. on $\{|\nabla u|=0\}\subset\Omega\times(0,\infty)$
and
\begin{equation}
H^\varepsilon \sqrt{|\nabla u^\varepsilon|^2+\varepsilon^2}\weakstar H|\nabla u|\,\,\mbox{in}\,\,L^{\infty}(\Omega\times(0,\infty)).
\end{equation}
\end{lemma}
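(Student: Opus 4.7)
\textbf{Proof plan for Lemma \ref{def of curvature}.}
The starting point is the identity $\partial_t u^\varepsilon = H^\varepsilon \sqrt{|\nabla u^\varepsilon|^2+\varepsilon^2}$, which simply rewrites the PDE \eqref{approx eq}. From \eqref{bounds for level set solution}, the sequence $\{\partial_t u^\varepsilon\}$ is uniformly bounded in $L^\infty(\Omega\times(0,T))$, so a subsequence converges weak-$*$ to some $v\in L^\infty$. Since $u^\varepsilon\to u$ locally uniformly, distributional convergence forces $v=\partial_t u$. Thus
\begin{equation}
H^\varepsilon\sqrt{|\nabla u^\varepsilon|^2+\varepsilon^2}\ \weakstar\ \partial_t u\quad\mbox{in }L^\infty(\Omega\times(0,T)).
\end{equation}
Once the subsidiary claim $\partial_t u=0$ a.e.\ on $\{|\nabla u|=0\}$ is established, the definition of $H$ gives $H|\nabla u|=\partial_t u$ on $\{|\nabla u|>0\}$ and $H|\nabla u|=0=\partial_t u$ on $\{|\nabla u|=0\}$, so the limit equals $H|\nabla u|$ as required; uniqueness of the limit then upgrades subsequential convergence to convergence of the whole family.

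It remains to prove that $\partial_t u=0$ a.e.\ on $\{|\nabla u|=0\}$. The natural device is Cauchy--Schwarz applied to the PDE: for any $\phi\in C_c^\infty(\Omega\times(0,T))$,
\begin{equation}
\left|\int\phi\,\partial_t u^\varepsilon\,dx\,dt\right|^2
=\left|\int\phi\,H^\varepsilon\sqrt{|\nabla u^\varepsilon|^2+\varepsilon^2}\,dx\,dt\right|^2
\leq \Big(\int\phi^2\sqrt{|\nabla u^\varepsilon|^2+\varepsilon^2}\Big)\Big(\int(H^\varepsilon)^2\sqrt{|\nabla u^\varepsilon|^2+\varepsilon^2}\Big).
\end{equation}
The second factor is uniformly bounded by Lemma \ref{lem:approx L2 estimate}, and the first factor converges to $\int\phi^2|\nabla u|$ by Lemma \ref{limit}(i) (here $\phi^2\in L^1$ serves as the test function against the weak-$*$ convergence). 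Passing to the limit yields
\begin{equation}
\left|\int\phi\,\partial_t u\,dx\,dt\right|^2\leq C_T\int\phi^2|\nabla u|\,dx\,dt\qquad\forall\phi\in C_c^\infty(\Omega\times(0,T)).
\end{equation}

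The punchline is a density argument. Since $\partial_t u\in L^\infty$, the inequality above extends by approximation to any bounded Borel $\phi$ with compact support. Plugging in $\phi=\operatorname{sign}(\partial_t u)\,\chi_{K\cap\{|\nabla u|=0\}}$ (suitably mollified by $C_c^\infty$ approximants, so that the right-hand side still tends to zero by dominated convergence while the left-hand side tends to $(\int_{K\cap\{|\nabla u|=0\}}|\partial_t u|)^2$) forces $\partial_t u=0$ almost everywhere on $K\cap\{|\nabla u|=0\}$ for every compact $K\subset\Omega\times(0,T)$, hence almost everywhere on $\{|\nabla u|=0\}$.

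The main obstacle is not any single estimate but rather making the last approximation step rigorous: one must verify that the mollified test functions can be chosen so that $\int\phi_n^2|\nabla u|\to 0$ while $\int\phi_n\,\partial_t u\to \int_{K\cap\{|\nabla u|=0\}}|\partial_t u|$. This is routine using the Lusin-type approximation of the bounded Borel function $\operatorname{sign}(\partial_t u)\chi_{K\cap\{|\nabla u|=0\}}$ by compactly supported continuous functions in $L^1(dx\,dt)$, combined with the boundedness of $\partial_t u$ and $|\nabla u|$. Everything else follows Evans--Spruck \cite[Lemma 4.2]{Evans4} essentially verbatim, since only interior quantities enter and the Neumann boundary condition plays no role here.
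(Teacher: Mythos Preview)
Your proposal is correct and follows exactly the Evans--Spruck argument \cite[Lemma 4.2]{Evans4} that the paper cites verbatim as its proof: identify $H^\varepsilon\sqrt{|\nabla u^\varepsilon|^2+\varepsilon^2}=\partial_t u^\varepsilon\weakstar\partial_t u$, then use Cauchy--Schwarz together with the $L^2$ estimate of Lemma~\ref{lem:approx L2 estimate} to show $\partial_t u$ vanishes where $|\nabla u|$ does. One small technical remark: Lemma~\ref{limit}(i) is stated for each fixed $t$, so to pass to the limit in the space--time integral $\int\phi^2\sqrt{|\nabla u^\varepsilon|^2+\varepsilon^2}\,dx\,dt$ you should either invoke the space--time version of the weak-$*$ convergence (which follows from the same compensated compactness argument) or combine the fixed-$t$ statement with dominated convergence in $t$ after testing against tensor products $\psi(x)\rho(t)$; this is routine and does not affect the structure of the argument.
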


Then we claim the $L^2$ estimate on the mean curvature $H$. Theorem \ref{thm:L2 estimate} may be derived from Lemma \ref{lem:approx L2 estimate}. See \cite[Lemma 4.3]{Evans4} for the details.

\begin{theorem} \label{thm:L2 estimate}
We have
\begin{equation}
\int_0^\infty dt\int_{\Omega}H^2|\nabla u|\,dx\leq \int_{\Omega}|\nabla g|\,dx<\infty.
\end{equation}
\end{theorem}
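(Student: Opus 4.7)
The plan is to follow Evans--Spruck and pass to the limit in Lemma \ref{lem:approx L2 estimate} using a duality/convexity trick, exploiting the weak-$*$ convergences from Lemma \ref{limit}(i) and Lemma \ref{def of curvature}. The key elementary observation is that for any real number $a$ and any $b>0$ and any $c\in\mathbb{R}$,
\begin{equation*}
\frac{a^2}{b}\geq 2ca-c^2 b,
\end{equation*}
with equality at $c=a/b$. Applying this pointwise with $a=H^{\varepsilon}\sqrt{|\nabla u^{\varepsilon}|^2+\varepsilon^2}$, $b=\sqrt{|\nabla u^{\varepsilon}|^2+\varepsilon^2}$, and $c=\psi(x,t)$, where $\psi\in C(\overline{\Omega}\times[0,T])$, yields
\begin{equation*}
|H^{\varepsilon}|^2\sqrt{|\nabla u^{\varepsilon}|^2+\varepsilon^2}
\geq 2\psi\, H^{\varepsilon}\sqrt{|\nabla u^{\varepsilon}|^2+\varepsilon^2}-\psi^2\sqrt{|\nabla u^{\varepsilon}|^2+\varepsilon^2}.
\end{equation*}

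Multiplying by a non-negative $\phi\in C(\overline{\Omega}\times[0,T])$, integrating on $\Omega\times[0,T]$, and sending $\varepsilon\to 0$ we can pass to the limit on the right-hand side by Lemma \ref{limit}(i) (applied to the bounded, continuous test function $-\phi\psi^2$) and Lemma \ref{def of curvature} (applied to $2\phi\psi$), obtaining
\begin{equation*}
\liminf_{\varepsilon\to 0}\int_0^T\!\!\int_{\Omega}\phi|H^{\varepsilon}|^2\sqrt{|\nabla u^{\varepsilon}|^2+\varepsilon^2}\,dx\,dt
\geq \int_0^T\!\!\int_{\Omega}\phi\bigl(2\psi H-\psi^2\bigr)|\nabla u|\,dx\,dt.
\end{equation*}
On the other hand, by \eqref{difference approx area} and the dominated convergence theorem,
\begin{equation*}
\limsup_{\varepsilon\to 0}\int_0^T\!\!\int_{\Omega}|H^{\varepsilon}|^2\sqrt{|\nabla u^{\varepsilon}|^2+\varepsilon^2}\,dx\,dt
\leq \lim_{\varepsilon\to 0}\int_{\Omega}\sqrt{|\nabla g|^2+\varepsilon^2}\,dx=\int_{\Omega}|\nabla g|\,dx.
\end{equation*}
Taking $\phi\equiv 1$ (after a routine smooth truncation in time) combines these into
\begin{equation*}
\int_0^T\!\!\int_{\Omega}\bigl(2\psi H-\psi^2\bigr)|\nabla u|\,dx\,dt\leq \int_{\Omega}|\nabla g|\,dx.
\end{equation*}

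It remains to take the supremum over $\psi$. Pointwise, on $\{|\nabla u|>0\}$ the integrand $2\psi H-\psi^2$ is maximized at $\psi=H$, yielding $H^2$; on $\{|\nabla u|=0\}$ both $H$ and the weight vanish by Lemma \ref{def of curvature}, so they contribute nothing. Approximating $H$ in $L^2(|\nabla u|\,dx\,dt)$ on $\Omega\times[0,T]$ by a sequence of continuous $\psi_n$ (using the density of $C(\overline{\Omega}\times[0,T])$ in $L^2$ of any finite Radon measure, truncating first to handle the possible non-integrability of $H$) and substituting $\psi=\psi_n$ shows that the left-hand side passes in the limit to $\int_0^T\!\!\int_\Omega H^2|\nabla u|\,dx\,dt$, yielding the desired bound on $[0,T]$. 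Finally, letting $T\to\infty$ by monotone convergence gives the stated estimate, and the finiteness of $\int_\Omega|\nabla g|\,dx$ is immediate from Lemma \ref{initial datum} since $g$ is smooth on the bounded set $\Omega$. The only delicate point is the supremum step, where one must justify that a merely measurable $H$ may be approximated by continuous $\psi_n$ against the weight $|\nabla u|\,dx\,dt$; this is standard via Lusin and truncation, and is identical to the argument in \cite[Lemma 4.3]{Evans4}.
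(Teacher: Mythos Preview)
Your proposal is correct and is precisely the duality/lower-semicontinuity argument from \cite[Lemma 4.3]{Evans4} that the paper itself invokes in lieu of a written proof. The only cosmetic remark is that Lemma~\ref{limit}(i) is stated for each fixed $t$, so the passage to the space--time limit against $-\phi\psi^2$ uses in addition the uniform bound \eqref{bounds for level set solution} together with dominated convergence in $t$; this is routine and implicit in your write-up.
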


We next establish the first variation formula and Brakke's inequality of the level set solution. The proofs are similar to \cite{Evans4}, hence we simply sketch their proofs. To prove the first variation formula, we deduce the $\varepsilon$-version of the first variation formula for $u^{\varepsilon}$.

\begin{lemma} \label{lem:approx 1st var fml}
For any $X\in \rm{Lip}(\overline{\Omega};{\mathbb R}^N)$,
\begin{equation} \label{approx 1st var fml}
\begin{aligned}
&\int_{\Omega}\tr\left((I-\nu^{\varepsilon}\otimes \nu^{\varepsilon})\nabla X\right)\sqrt{|\nabla u^{\varepsilon}|^2+\varepsilon^2}\,dx\\
&=\int_{\Omega}H^{\varepsilon}(\nu^{\varepsilon}\cdot X)\sqrt{|\nabla u^{\varepsilon}|^2+\varepsilon^2}\,dx+\int_{\partial \Omega}(X\cdot \nu_{\partial \Omega})\sqrt{|\nabla u^{\varepsilon}|^2+\varepsilon^2}\,d\mathcal{H}^{N-1}.
\end{aligned}
\end{equation}
\end{lemma}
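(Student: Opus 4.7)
The plan is to expand $\tr(A^\varepsilon\nabla X)=\di(X)-\nu_i^\varepsilon\nu_j^\varepsilon\partial_j X^i$ and split the left-hand side of \eqref{approx 1st var fml} into two pieces accordingly. Writing $w:=\sqrt{|\nabla u^\varepsilon|^2+\varepsilon^2}$, the $\di(X)$ piece becomes, by an ordinary integration by parts, the sum of the desired boundary term $\int_{\partial\Omega}w(X\cdot\nu_{\partial\Omega})\,d\mathcal{H}^{N-1}$ and an unwanted bulk remainder $-\int_\Omega\nabla w\cdot X\,dx$.

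The second piece is where the Neumann condition on $u^\varepsilon$ actually does its work. I would use the identity $\nu_j^\varepsilon w=\partial_j u^\varepsilon$ to rewrite $-\int_\Omega\nu_i^\varepsilon\nu_j^\varepsilon w\,\partial_j X^i\,dx$ as $-\int_\Omega\nu_i^\varepsilon\,\partial_j u^\varepsilon\,\partial_j X^i\,dx$ and then integrate by parts in the index $j$. The boundary contribution produced in this step is $-\int_{\partial\Omega}(\nu^\varepsilon\cdot X)(\nabla u^\varepsilon\cdot\nu_{\partial\Omega})\,d\mathcal{H}^{N-1}$, which vanishes identically thanks to $\nabla u^\varepsilon\cdot\nu_{\partial\Omega}=0$. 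This is the one and only place where the Neumann condition enters the computation, and its cancellation is what keeps the final formula free of any tangential boundary term. The remaining bulk integrand $\partial_j(\nu_i^\varepsilon\partial_j u^\varepsilon)X^i$ then simplifies pointwise by the product rule together with $\di(\nu^\varepsilon)=H^\varepsilon$ and the identity $\partial_i w=\nu_k^\varepsilon\partial_{ik}u^\varepsilon$, producing exactly $H^\varepsilon w(\nu^\varepsilon\cdot X)+\nabla w\cdot X$.

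Adding the two pieces, the $\pm\int_\Omega\nabla w\cdot X\,dx$ contributions cancel and \eqref{approx 1st var fml} emerges. Since $X$ is only Lipschitz I would handle the integrations by parts either by mollifying $X$ to a smooth approximating sequence and passing to the limit (all coefficients built from the smooth function $u^\varepsilon$ remain bounded uniformly), or by invoking the divergence theorem on Lipschitz domains for products of a $C^1$ and a $W^{1,\infty}$ field. I do not anticipate any real obstacle: the argument is essentially a bookkeeping exercise, and the only substantive observation is that the boundary contribution from the second integration by parts is killed precisely by the Neumann condition $\nabla u^\varepsilon\cdot\nu_{\partial\Omega}=0$.
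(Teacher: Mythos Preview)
Your proposal is correct and follows essentially the same route as the paper: two integrations by parts, with the Neumann condition $\nabla u^\varepsilon\cdot\nu_{\partial\Omega}=0$ killing exactly one boundary term, and the identity $\nu_k^\varepsilon\partial_{ik}u^\varepsilon=\partial_i w$ doing the algebraic cleanup. The only cosmetic difference is the direction of travel---the paper starts from $\int_\Omega H^\varepsilon(\nabla u^\varepsilon\cdot X)\,dx$ and integrates $H^\varepsilon=\di(\nu^\varepsilon)$ by parts first, whereas you start from the left-hand side and split $\tr(A^\varepsilon\nabla X)=\di(X)-{}^t\nu^\varepsilon\nabla X\,\nu^\varepsilon$; the two computations are inverses of one another.
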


\begin{proof}
Recalling that $\nu^{\varepsilon}\cdot \nu_{\partial \Omega}=0$, we compute
\[
\begin{aligned}
&\int_{\Omega}H^{\varepsilon}(\nu^{\varepsilon}\cdot X)\sqrt{|\nabla u^{\varepsilon}|^2+\varepsilon^2}\,dx=\int_{\Omega}H^{\varepsilon}(\nabla u^{\varepsilon}\cdot X)\,dx\\
&=\int_{\partial \Omega}(\nu^{\varepsilon}\cdot \nu_{\partial \Omega})(\nabla u^{\varepsilon}\cdot X)\,d\mathcal{H}^{N-1}-\int_{\Omega}{}^t\nu^{\varepsilon}\nabla^2u^{\varepsilon}X+{}^t\nu^{\varepsilon}\nabla X\nabla u^{\varepsilon}\,dx\\
&=-\int_{\Omega}\nabla \left(\sqrt{|\nabla u^{\varepsilon}|^2+\varepsilon^2}\right)\cdot X+({}^t\nu^{\varepsilon}\nabla X\nu^{\varepsilon})\sqrt{|\nabla u^{\varepsilon}|^2+\varepsilon^2}\,dx\\
&=-\int_{\partial \Omega}(X\cdot \nu_{\partial \Omega})\sqrt{|\nabla u^{\varepsilon}|^2+\varepsilon^2}\,d\mathcal{H}^{N-1}+\int_{\Omega}(\textrm{div}~X-{}^t\nu^{\varepsilon}\nabla X\nu^{\varepsilon})\sqrt{|\nabla u^{\varepsilon}|^2+\varepsilon^2}\,dx\\
&=-\int_{\partial \Omega}(X\cdot \nu_{\partial \Omega})\sqrt{|\nabla u^{\varepsilon}|^2+\varepsilon^2}\,d\mathcal{H}^{N-1}+\int_{\Omega}\tr\left((I-\nu^{\varepsilon}\otimes \nu^{\varepsilon})\nabla X\right)\sqrt{|\nabla u^{\varepsilon}|^2+\varepsilon^2}\,dx.
\end{aligned}
\]
\end{proof}

We take the limit of the formula \eqref{approx 1st var fml} to obtain the following, which may be regarded as the first variation formula of the level set solution $u$.

\begin{theorem} \label{thm:1st var fml}
For a.e. $t\geq 0$, we have the following equality for each $X\in \rm{Lip}(\overline{\Omega};{\mathbb R}^N)$ with $X\cdot \nu_{\partial \Omega}=0$ on $\partial \Omega$:
\begin{equation} \label{1st var fml}
\int_{\Omega}\tr\left((I-\nu\otimes \nu)\nabla X\right)|\nabla u|\,dx=\int_{\Omega}H(\nu\cdot X)|\nabla u|\,dx.
\end{equation}
\end{theorem}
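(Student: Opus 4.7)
The plan is to pass to the limit $\varepsilon\to 0$ in the approximate first variation formula \eqref{approx 1st var fml} of Lemma \ref{lem:approx 1st var fml}. Since $X\cdot\nu_{\partial\Omega}=0$ on $\partial\Omega$ by assumption, the boundary integral in \eqref{approx 1st var fml} vanishes identically for every $\varepsilon>0$, so only the two interior integrals have to be analyzed.

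For the left-hand side I would split the integrand as $\tr((I-\nu^{\varepsilon}\otimes\nu^{\varepsilon})\nabla X)=\di X-\langle\nabla X\,\nu^{\varepsilon},\nu^{\varepsilon}\rangle$. The piece $\di X\cdot\sqrt{|\nabla u^{\varepsilon}|^2+\varepsilon^2}$ converges to $\di X\cdot|\nabla u|$ directly by testing the weak-$*$ convergence of Lemma \ref{limit}(i) against the fixed $L^1$ function $\di X$. For the quadratic piece I would split $\Omega=\{|\nabla u|>0\}\cup\{|\nabla u|=0\}$. On $\{|\nabla u|>0\}$, Lemma \ref{limit}(ii) provides strong $L^2$ convergence $\nu^{\varepsilon}\to\nu$, whence $\langle\nabla X\nu^{\varepsilon},\nu^{\varepsilon}\rangle\to\langle\nabla X\nu,\nu\rangle$ strongly in $L^1$; this pairs against the bounded weak-$*$ sequence $\sqrt{|\nabla u^{\varepsilon}|^2+\varepsilon^2}$ to give the desired limit. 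On $\{|\nabla u|=0\}$ the target vanishes and the prelimit is dominated by $\|\nabla X\|_{\infty}\int_{\{|\nabla u|=0\}}\sqrt{|\nabla u^{\varepsilon}|^2+\varepsilon^2}\,dx$, which tends to zero by the same weak-$*$ convergence tested against $\mathbf{1}_{\{|\nabla u|=0\}}$.

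For the right-hand side I would use the equation to rewrite $H^{\varepsilon}\sqrt{|\nabla u^{\varepsilon}|^2+\varepsilon^2}=\partial_t u^{\varepsilon}$, turning the integral into $\int_{\Omega}(\nu^{\varepsilon}\cdot X)\partial_t u^{\varepsilon}\,dx$. The target is $\int_{\Omega}H(\nu\cdot X)|\nabla u|\,dx=\int_{\Omega}(\nu\cdot X)\partial_t u\,dx$, where the second equality uses Lemma \ref{def of curvature} (both $H|\nabla u|=\partial_t u$ on $\{|\nabla u|>0\}$ and $\partial_t u=0$ on $\{|\nabla u|=0\}$). Since Lemma \ref{limit} is stated for each fixed $t$ while Lemma \ref{def of curvature} is a spacetime statement, I would first test the whole formula against an arbitrary $\varphi\in C_c((0,\infty))$. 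The spacetime weak-$*$ convergence $\partial_t u^{\varepsilon}\weakstar\partial_t u$ in $L^{\infty}(\Omega\times(0,\infty))$ from Lemma \ref{def of curvature}, combined with the spacetime version of Lemma \ref{limit}(ii) and the same splitting of $\Omega$ as above, gives
\[
\int_0^{\infty}\varphi(t)\int_{\Omega}(\nu^{\varepsilon}\cdot X)\partial_t u^{\varepsilon}\,dx\,dt\;\longrightarrow\;\int_0^{\infty}\varphi(t)\int_{\Omega}(\nu\cdot X)\partial_t u\,dx\,dt.
\]
Since $\varphi$ is arbitrary, equality of the two $t$-integrands holds for a.e.\ $t\geq 0$, which is exactly \eqref{1st var fml}.

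The main obstacle is the right-hand side on the degenerate set $\{|\nabla u|=0\}$: there both factors $\nu^{\varepsilon}\cdot X$ and $\partial_t u^{\varepsilon}$ are only weakly-$*$ convergent, so a direct product-of-limits argument fails. The resolution is precisely Lemma \ref{def of curvature}, which forces the weak-$*$ limit of $\partial_t u^{\varepsilon}$ to vanish on $\{|\nabla u|=0\}$; pairing against the fixed $L^1$ cutoff $\mathbf{1}_{\{|\nabla u|=0\}}|X|\varphi(t)$ then kills that contribution. Away from this degenerate set the argument reduces to a routine strong-times-weak pairing enabled by Lemma \ref{limit}.
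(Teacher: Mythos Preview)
Your overall strategy matches the paper's exactly: start from the approximate identity \eqref{approx 1st var fml}, drop the boundary term using $X\cdot\nu_{\partial\Omega}=0$, integrate against an arbitrary time test function, split into $\{|\nabla u|>0\}$ and $\{|\nabla u|=0\}$, and pass to the limit via Lemmas \ref{limit} and \ref{def of curvature}. Your treatment of the left-hand side and of the right-hand side on $\{|\nabla u|>0\}$ is correct.

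The gap is precisely where you flag it, and your proposed resolution does not close it. Knowing that $\partial_t u^{\varepsilon}\weakstar\partial_t u=0$ on $\{|\nabla u|=0\}$ (Lemma \ref{def of curvature}) only lets you test against a \emph{fixed} $L^1$ function. The factor $\nu^{\varepsilon}\cdot X$ still depends on $\varepsilon$, so ``pairing against the fixed $L^1$ cutoff $\mathbf{1}_{\{|\nabla u|=0\}}|X|\varphi(t)$'' does not control the integral: dominating $|\nu^{\varepsilon}\cdot X|\leq|X|$ gives $\int|X|\,|\partial_t u^{\varepsilon}|$, and weak-$*$ convergence of $\partial_t u^{\varepsilon}$ to $0$ says nothing about $\int f\,|\partial_t u^{\varepsilon}|$. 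A product of two bounded weak-$*$ convergent sequences need not converge to the product of the limits, even when one limit is zero.

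The missing ingredient is Lemma \ref{lem:approx L2 estimate}. Keep the right-hand side in its original form $H^{\varepsilon}(\nu^{\varepsilon}\cdot X)\sqrt{|\nabla u^{\varepsilon}|^2+\varepsilon^2}$ and apply Cauchy--Schwarz with weight $\sqrt{|\nabla u^{\varepsilon}|^2+\varepsilon^2}$:
\[
\Bigl|\int_0^T\!\!\varphi\!\int_{\{|\nabla u|=0\}}\!\!H^{\varepsilon}(\nu^{\varepsilon}\cdot X)\sqrt{|\nabla u^{\varepsilon}|^2+\varepsilon^2}\Bigr|
\le C\Bigl(\int_0^T\!\!\int_{\Omega}\!|H^{\varepsilon}|^2\sqrt{\,\cdot\,}\Bigr)^{\!1/2}\Bigl(\int_0^T\!\!\int_{\{|\nabla u|=0\}}\!\!\sqrt{\,\cdot\,}\Bigr)^{\!1/2}.
\]
The first factor is bounded uniformly in $\varepsilon$ by Lemma \ref{lem:approx L2 estimate}; the second tends to zero because $\sqrt{|\nabla u^{\varepsilon}|^2+\varepsilon^2}\geq 0$ and $\weakstar|\nabla u|=0$ on $\{|\nabla u|=0\}$ (Lemma \ref{limit}(i) tested against the nonnegative indicator, then dominated convergence in $t$). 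This is the argument the paper defers to Evans--Spruck for.
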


\begin{proof}
By Lemma \ref{lem:approx 1st var fml}, we have
\[
\int_{\Omega}\tr\left((I-\nu^{\varepsilon}\otimes \nu^{\varepsilon})\nabla X\right)\sqrt{|\nabla u^{\varepsilon}|^2+\varepsilon^2}\,dx=\int_{\Omega}H^{\varepsilon}(\nu^{\varepsilon}\cdot X)\sqrt{|\nabla u^{\varepsilon}|^2+\varepsilon^2}\,dx.
\]

Hence for any $T>0$ and any $\phi\in L^{\infty}(0,T)$,
\begin{equation}
\int_0^T\phi\int_{\Omega}\tr\left((I-\nu^{\varepsilon}\otimes \nu^{\varepsilon})\nabla X\right)\sqrt{|\nabla u^{\varepsilon}|^2+\varepsilon^2}\,dxdt=\int_0^T \phi\int_{\Omega}H^{\varepsilon}(\nu^{\varepsilon}\cdot X)\sqrt{|\nabla u^{\varepsilon}|^2+\varepsilon^2}\,dxdt.
\end{equation}
Now separate the integration into $\{|\nabla u|>0\}$ and $\{|\nabla u|=0\}$, and take the limit of each part as $\varepsilon\to 0$ using Lemma \ref{limit} and \ref{def of curvature} to obtain
\begin{equation}
\int_0^T\phi\int_{\Omega}\tr\left((I-\nu\otimes \nu)\nabla X\right)|\nabla u|\,dxdt=\int_0^T \phi\int_{\Omega}H(\nu\cdot X)|\nabla u|\,dxdt.
\end{equation}
See \cite[Theorem 5.1]{Evans4} for the details.
\end{proof}

In a similar manner, one may prove the following Brakke's inequality.

\begin{theorem} \label{thm:Brakke ineq}
For any $0\leq t_1<t_2<\infty$ and any $\phi\in C^1(\overline{\Omega}\times [0,\infty))$ with $\phi\geq 0$,
\begin{equation} \label{Brakke ineq}
\left[\int_{\Omega}\phi|\nabla u|\,dx\right]_{t=t_1}^{t_2}\leq \int_{t_1}^{t_2}\int_{\Omega}\left(\partial_t\phi-H(\nu\cdot \nabla \phi)-\phi H^2\right)|\nabla u|\,dxdt.
\end{equation}
\end{theorem}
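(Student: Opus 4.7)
The plan is to follow the scheme already used in the proofs of Theorem \ref{thm:1st var fml} and \cite[Theorem 5.1]{Evans4}: first derive an $\varepsilon$-regularized identity for $\frac{d}{dt}\int_\Omega \phi\sqrt{|\nabla u^\varepsilon|^2+\varepsilon^2}\,dx$, integrate in time, and then pass to the limit $\varepsilon\to 0$ using Lemma \ref{limit}, Lemma \ref{def of curvature}, and Theorem \ref{thm:L2 estimate}. The crucial observation for handling the Neumann datum is that $\nu^{\varepsilon}\cdot\nu_{\partial\Omega}=0$ on $\partial\Omega$, so that the boundary contributions created by integration by parts vanish identically even when $\nabla\phi\cdot\nu_{\partial\Omega}\neq 0$; this is what distinguishes the present set-up from \cite{Edelen,MizuTone}.

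Concretely, I would compute
\begin{equation*}
\frac{d}{dt}\int_{\Omega}\phi\sqrt{|\nabla u^{\varepsilon}|^2+\varepsilon^2}\,dx
=\int_{\Omega}\partial_t\phi\sqrt{|\nabla u^{\varepsilon}|^2+\varepsilon^2}\,dx
+\int_{\Omega}\phi\,\nu^{\varepsilon}\cdot\nabla\partial_t u^{\varepsilon}\,dx,
\end{equation*}
integrate by parts in the second integral, use $\nu^{\varepsilon}\cdot\nu_{\partial\Omega}=0$ to discard the boundary term, and substitute $\partial_t u^{\varepsilon}=H^{\varepsilon}\sqrt{|\nabla u^{\varepsilon}|^2+\varepsilon^2}$ to arrive at the $\varepsilon$-equality
\begin{equation*}
\frac{d}{dt}\int_{\Omega}\phi\sqrt{|\nabla u^{\varepsilon}|^2+\varepsilon^2}\,dx
=\int_{\Omega}\bigl(\partial_t\phi - H^{\varepsilon}(\nu^{\varepsilon}\cdot\nabla\phi)-\phi|H^{\varepsilon}|^2\bigr)\sqrt{|\nabla u^{\varepsilon}|^2+\varepsilon^2}\,dx.
\end{equation*}
Integrating over $[t_1,t_2]$ gives the approximate analogue of \eqref{Brakke ineq} with equality.

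It remains to pass to the limit. For the boundary values at $t=t_1,t_2$ and the $\partial_t\phi$ term, weak-$*$ convergence of $\sqrt{|\nabla u^{\varepsilon}|^2+\varepsilon^2}$ to $|\nabla u|$ (Lemma \ref{limit}(i)) suffices. For the term containing $\nabla\phi$, I would split $\Omega$ into $\{|\nabla u|>0\}$ and $\{|\nabla u|=0\}$: on the first set Lemma \ref{limit}(ii) gives strong $L^2$ convergence of $\nu^{\varepsilon}$ to $\nu$, which together with the weak-$*$ convergence $H^{\varepsilon}\sqrt{|\nabla u^{\varepsilon}|^2+\varepsilon^2}\weakstar H|\nabla u|$ (Lemma \ref{def of curvature}) yields the limit $\int H(\nu\cdot\nabla\phi)|\nabla u|\,dx\,dt$, while on the second set both the $\varepsilon$-integrand and the putative limit vanish in the limit by Lemma \ref{def of curvature}. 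The main obstacle, and the reason the limit is only an inequality, is the curvature-squared term: one must establish
\begin{equation*}
\liminf_{\varepsilon\to 0}\int_{t_1}^{t_2}\!\!\int_{\Omega}\phi|H^{\varepsilon}|^2\sqrt{|\nabla u^{\varepsilon}|^2+\varepsilon^2}\,dx\,dt
\geq \int_{t_1}^{t_2}\!\!\int_{\Omega}\phi H^2|\nabla u|\,dx\,dt.
\end{equation*}
I would obtain this by the standard convex-duality trick: for every $\psi\in C(\overline{\Omega}\times[0,\infty))$ the elementary inequality $|H^{\varepsilon}|^2\geq 2\psi H^{\varepsilon}-\psi^2$ multiplied by $\phi\sqrt{|\nabla u^{\varepsilon}|^2+\varepsilon^2}\geq 0$ and integrated passes to the limit by Lemmas \ref{limit} and \ref{def of curvature}, producing
\begin{equation*}
\liminf_{\varepsilon\to 0}\int\!\!\int\phi|H^{\varepsilon}|^2\sqrt{|\nabla u^{\varepsilon}|^2+\varepsilon^2}\geq \int\!\!\int \phi(2\psi H-\psi^2)|\nabla u|.
\end{equation*}
Approximating $H$ in $L^2(|\nabla u|\,dx\,dt)$ by such $\psi$ (using Theorem \ref{thm:L2 estimate} and a density argument) and taking the supremum over $\psi$ yields the desired lower bound. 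Combining with the exact limits for the other three terms produces \eqref{Brakke ineq} for all $0\leq t_1<t_2<\infty$.
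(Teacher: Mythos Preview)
Your proposal is correct and follows essentially the same route as the paper: the paper derives the identical $\varepsilon$-identity (using $\nu^{\varepsilon}\cdot\nu_{\partial\Omega}=0$ to kill the boundary term), integrates in $t$, and then simply cites \cite[Theorem~5.2]{Evans4} for the passage to the limit. What you have written is precisely a sketch of that Evans--Spruck limit argument---splitting into $\{|\nabla u|>0\}$ and $\{|\nabla u|=0\}$ via Lemmas~\ref{limit} and~\ref{def of curvature}, and handling the quadratic curvature term by the convex-duality lower-semicontinuity trick---so there is no substantive difference between your approach and the paper's.
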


\begin{proof} Recalling that $\nu^{\varepsilon}\cdot \nu_{\partial \Omega}=0$ on $\partial \Omega$, we compute
\[
\begin{aligned}
&\frac{d}{dt}\int_{\Omega}\phi\sqrt{|\nabla u^{\varepsilon}|^2+\varepsilon^2}\,dx=\int_{\Omega}\partial_t \phi\sqrt{|\nabla u^{\varepsilon}|^2+\varepsilon^2}+\phi(\nu^{\varepsilon}\cdot \nabla(\partial_t u^{\varepsilon}))\,dx\\
&=\int_{\Omega}\partial_t \phi\sqrt{|\nabla u^{\varepsilon}|^2+\varepsilon^2}-\partial_t u^{\varepsilon}(\nu^{\varepsilon}\cdot \nabla \phi)-\phi \partial_t u^{\varepsilon}H^{\varepsilon}\,dx+\int_{\partial \Omega}\phi \partial_t u^{\varepsilon}(\nu^{\varepsilon}\cdot \nu_{\partial \Omega})\,d\mathcal{H}^{N-1}\\
&=\int_{\Omega}\partial_t \phi\sqrt{|\nabla u^{\varepsilon}|^2+\varepsilon^2}-H^{\varepsilon}(\nu^{\varepsilon}\cdot \nabla \phi)\sqrt{|\nabla u^{\varepsilon}|^2+\varepsilon^2}-\phi |H^{\varepsilon}|^2\sqrt{|\nabla u^{\varepsilon}|^2+\varepsilon^2}\,dx.
\end{aligned}
\]
Integrating in $t$, we have
\begin{equation}
\begin{aligned}
&\left[\int_{\Omega}\phi\sqrt{|\nabla u^{\varepsilon}|^2+\varepsilon^2}\,dx\right]_{t=t_1}^{t_2}\\
&=\int_{t_1}^{t_2}\int_{\Omega}\partial_t \phi\sqrt{|\nabla u^{\varepsilon}|^2+\varepsilon^2}-H^{\varepsilon}(\nu^{\varepsilon}\cdot \nabla \phi)\sqrt{|\nabla u^{\varepsilon}|^2+\varepsilon^2}-\phi |H^{\varepsilon}|^2\sqrt{|\nabla u^{\varepsilon}|^2+\varepsilon^2}\,dxdt.
\end{aligned}
\end{equation}
By the same argument as in \cite[Theorem 5.2]{Evans4}, one may obtain \eqref{Brakke ineq}.
\end{proof}

\section{Interpretation to varifolds}
In Section 4, we interpret formulae in Section 3 in the language of varifolds. We shall prove that almost all level sets of $u$ are unit-density Brakke flows with appropriate boundary conditions. First of all, we collect basic properties of $\Gamma_t^{\gamma}=\{x\in \overline{\Omega}~|~u(x,t)=\gamma\}$.

\begin{lemma} \label{basic}
For a.e. $(\gamma,t)\in \mathbb R\times(0,\infty)$, we have the following properties. 
\begin{itemize}
\item[(i)] For $\mathcal{H}^{N-1}$-a.e. $x\in \Gamma_t^{\gamma}$, $u$ is differentiable at $(x,t)$ and $|\nabla u(x,t)|\not=0$.
\item [(ii)] $\mathcal{H}^{N-1}(\Gamma_t^{\gamma})\leq \mathcal{H}^{N-1}(\Gamma_0^{\gamma})<\infty$ .
\item[(iii)] $\mathcal{H}^{N-1}(\Gamma_t^{\gamma}\cap \partial \Omega)=0$.
\item[(iv)] $\Gamma_t^{\gamma}$ is $(N-1)$-rectifiable. Moreover, its tangent space is given by
\begin{equation}
T_x\Gamma_t^{\gamma}=I-\nu\otimes \nu
\quad\mathcal{H}^{N-1}\textrm{-a.e. on}~\Gamma_t^{\gamma}
\end{equation}
\end{itemize}
\end{lemma}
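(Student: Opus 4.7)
The strategy is to dispatch the four items in increasing order of difficulty: (i), (iii), and (iv) are essentially consequences of the Lipschitz regularity of $u$ and the classical coarea formula, while (ii) is where the substance lies because it needs quantitative PDE input.

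For (i), I would invoke Rademacher's theorem applied to $u$, which is Lipschitz on $\overline{\Omega}\times[0,T]$ by \eqref{bounds for level set solution}; for a.e. $t$ the slice $u(\cdot,t)$ is differentiable at Lebesgue-a.e. $x\in\Omega$. Letting $N_t$ denote the non-differentiability set and $Z_t:=\{x\in\Omega:|\nabla u(x,t)|=0\}$, the coarea formula applied to $E=N_t\cup Z_t$ gives $\int_E|\nabla u(\cdot,t)|\,dx=\int_{\mathbb R}\mathcal H^{N-1}(E\cap\Gamma_t^{\gamma})\,d\gamma=0$, so $\mathcal H^{N-1}((N_t\cup Z_t)\cap\Gamma_t^{\gamma})=0$ for a.e.\ $\gamma$, which is exactly (i). For (iv), at each $x\in\Gamma_t^{\gamma}\setminus(N_t\cup Z_t)$ the classical Lipschitz implicit function argument represents $\Gamma_t^{\gamma}$ locally as a Lipschitz graph with normal $\nu(x)$; countable $(N-1)$-rectifiability and the tangent formula $T_x\Gamma_t^{\gamma}=I-\nu\otimes\nu$ then follow. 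For (iii), I apply the coarea formula on the smooth $(N-1)$-manifold $\partial\Omega$ to the Lipschitz restriction $u(\cdot,t)|_{\partial\Omega}$: its left-hand side $\int_{\partial\Omega}|\nabla_{\partial\Omega}u(\cdot,t)|\,d\mathcal H^{N-1}$ is finite by compactness of $\partial\Omega$ and the Lipschitz bound on $u$, so $\mathcal H^{N-2}(\Gamma_t^{\gamma}\cap\partial\Omega)<\infty$ for a.e.\ $\gamma$, which forces $\mathcal H^{N-1}(\Gamma_t^{\gamma}\cap\partial\Omega)=0$.

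The heart of the lemma is (ii). My plan is to reprise the approximate Brakke computation in the proof of Theorem \ref{thm:Brakke ineq}, but with the nonstandard test function $\phi=\psi(u^{\varepsilon})$ for an arbitrary nonnegative $\psi\in C^1_c(\mathbb R)$; this is admissible at the $\varepsilon$-level because $u^{\varepsilon}$ is smooth. Using $\partial_t u^{\varepsilon}=H^{\varepsilon}\sqrt{|\nabla u^{\varepsilon}|^2+\varepsilon^2}$ together with $\nabla\phi=\psi'(u^{\varepsilon})\nabla u^{\varepsilon}$, the two drift terms in the Brakke integrand collapse algebraically to a remainder of the form $\psi'(u^{\varepsilon})H^{\varepsilon}\varepsilon^2$, which is controlled in $L^1$ uniformly in $\varepsilon$ by Theorem \ref{thm:L1 estimate}. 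Dropping the manifestly nonpositive $-\phi(H^{\varepsilon})^2\sqrt{\cdots}$ term and integrating from $0$ to $t$ yields the $\varepsilon$-inequality $\int_\Omega\psi(u^{\varepsilon}(\cdot,t))\sqrt{|\nabla u^{\varepsilon}|^2+\varepsilon^2}\,dx\le\int_\Omega\psi(g)\sqrt{|\nabla g|^2+\varepsilon^2}\,dx+O(\varepsilon^2)t$. Passing $\varepsilon\to 0$ via Lemma \ref{limit}(i) on the left and the smoothness of $g$ on the right produces $\int_\Omega\psi(u(\cdot,t))|\nabla u(\cdot,t)|\,dx\le\int_\Omega\psi(g)|\nabla g|\,dx$; applying coarea to each side rewrites this as $\int_{\mathbb R}\psi(\gamma)\mathcal H^{N-1}(\Gamma_t^{\gamma})\,d\gamma\le\int_{\mathbb R}\psi(\gamma)\mathcal H^{N-1}(\Gamma_0^{\gamma})\,d\gamma$, and arbitrariness of $\psi\ge 0$ gives the pointwise inequality in (ii) for a.e.\ $\gamma$. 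Finiteness of the right-hand side for a.e.\ $\gamma$ is immediate from coarea applied to the smooth bounded $g$.

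The main obstacle I expect is in justifying the single-time limit $\int_\Omega\psi(u^{\varepsilon}(\cdot,t))\sqrt{|\nabla u^{\varepsilon}|^2+\varepsilon^2}\,dx\to\int_\Omega\psi(u(\cdot,t))|\nabla u(\cdot,t)|\,dx$ for a.e.\ $t$, rather than only after time-integration; the weak-$*$ convergence in Lemma \ref{limit}(i) is pointwise in $t$, so combined with the uniform convergence $u^{\varepsilon}\to u$ and continuity of $\psi$ this goes through, but it is the step that must be executed with care. Everything else is a structural consequence of Rademacher plus coarea.
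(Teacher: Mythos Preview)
Your treatment of (i), (iii), (iv) matches the paper's: all three are handled by Rademacher plus the coarea formula (the paper simply cites \cite[Lemma 6.1]{Evans4} and standard rectifiability criteria), and your sketch is accurate.

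For (ii) your argument is correct but genuinely different from the paper's. The paper exploits the \emph{relabelling invariance} of the level set equation: for any smooth $\eta$ with $\eta'>0$, the function $\tilde u=\eta(u)$ is again the unique viscosity solution with initial datum $\tilde g=\eta(g)$, so the unweighted area-decrease inequality $\int_\Omega|\nabla\tilde u|\,dx\le\int_\Omega|\nabla\tilde g|\,dx$ (obtained once from \eqref{difference approx area} and a limit/lower-semicontinuity step) immediately becomes the weighted inequality $\int_\Omega\eta'(u)|\nabla u|\,dx\le\int_\Omega\eta'(g)|\nabla g|\,dx$, after which coarea and arbitrariness of $\eta'$ localise in $\gamma$. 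By contrast, you stay at the $\varepsilon$-level and insert the weight by choosing the test function $\phi=\psi(u^\varepsilon)$ in the Brakke-type identity; the cancellation you observe leaves only a $\psi'(u^\varepsilon)H^\varepsilon\varepsilon^2$ remainder, which you kill using the $L^1$ bound on $H^\varepsilon$ from Theorem~\ref{thm:L1 estimate}. The paper's route is structurally cleaner---it uses only the geometric nature of the equation and does not rely on Theorem~\ref{thm:L1 estimate}---while yours is a direct computation that trades that structural insight for the curvature estimate already in hand. Both lead to the same coarea endgame; your passage to the single-time limit via Lemma~\ref{limit}(i) combined with the uniform convergence $u^\varepsilon\to u$ is legitimate, as you anticipated.
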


\begin{proof} The properties (i) and (iii) follow from the coarea formula (see \cite[Lemma 6.1]{Evans4} for 
the detail)
and (iv) follows from the differentiability of $u$ and the well-known characterization of rectifiability. 
We only verify (ii). From \eqref{difference approx area}, we see
\[
\int_{\Omega}\sqrt{|\nabla u^{\varepsilon}|^2+\varepsilon^2}\,dx\leq \int_{\Omega}\sqrt{|\nabla g|^2+\varepsilon^2}\,dx.
\]
Letting $\varepsilon\to 0$, we have
\begin{equation} \label{area decreasing}
\int_{\Omega}|\nabla u|\,dx\leq \int_{\Omega}|\nabla g|\,dx.
\end{equation}
Take any $\eta\in C^{\infty}({\mathbb R})$ with $\eta'>0$. Then $\tilde{u}:=\eta(u)$ is a unique viscosity solution to \eqref{level set MCF} with an initial data $\tilde{g}:=\eta(g)$. From \eqref{area decreasing} for $\tilde{u}$ and $\tilde{g}$, we see
\[
\int_{\Omega}\eta'(u)|\nabla u|\,dx=\int_{\Omega}|\nabla \tilde{u}|\,dx\leq \int_{\Omega}|\nabla \tilde{g}|\,dx=\int_{\Omega}\eta'(g)|\nabla g|\,dx.
\]
For any $T>0$ and any $\phi\in L^{\infty}(0,T)$ with $\phi\geq 0$,
\[
\int_0^T \phi(t)\int_{\Omega}\eta'(u)|\nabla u|\,dxdt\leq \int_0^T\phi(t)\int_{\Omega}\eta'(g)|\nabla g|\,dxdt.
\]
Using the coarea formula, we have
\begin{equation}
\int_0^T\int_{{\mathbb R}}\phi(t)\eta'(\gamma)\mathcal{H}^{N-1}(\Gamma_t^{\gamma})\,d\gamma dt\leq \int_0^T\int_{{\mathbb R}}\phi(t)\eta'(\gamma)\mathcal{H}^{N-1}(\Gamma_0^{\gamma})\,d\gamma dt.
\end{equation}
By the arbitrariness of $\eta$ and $\phi$, we conclude $\mathcal{H}^{N-1}(\Gamma_t^{\gamma})\leq \mathcal{H}^{N-1}(\Gamma_0^{\gamma})$  for a.e. $(\gamma,t)$. On the other hand,
\begin{equation}
\int_{{\mathbb R}}\mathcal{H}^{N-1}(\Gamma_0^{\gamma})\,d\gamma=\int_{\Omega}|\nabla g|\,dx<\infty
\end{equation}
implies $\mathcal{H}^{N-1}(\Gamma_0^{\gamma})<\infty$ for a.e. $\gamma$.
\end{proof}

For a.e. $(\gamma,t)\in\mathbb R\times(0,\infty)$, let $V_t^{\gamma}$ be the
unit-density varifold induced by $\Gamma_t^{\gamma}$.
By the definition, its weight measure $\|V_t^{\gamma}\|$ is $\mathcal{H}^{N-1}\lfloor_{\Gamma_t^{\gamma}}$ and by Lemma \ref{basic}(iv), its first variation is given by
\begin{equation}\label{dV}
\delta V_t^{\gamma}(X)=\int_{\Gamma_t^{\gamma}}\tr((I-\nu\otimes \nu)\nabla X)\,d\mathcal{H}^{N-1}
\quad\textrm{for}~X\in C^1(\overline{\Omega};{\mathbb R}^N).
\end{equation}
The coarea formula and Theorem \ref{thm:L2 estimate} show the following. 

\begin{lemma} \label{lem:L2 estimate var}
For a.e. $\gamma\in {\mathbb R}$, we have
\begin{equation} \label{L2 estimate var}
\int_0^\infty dt\int_{\Omega}H^2\,d\|V_t^{\gamma}\|<\infty. 
\end{equation}
\end{lemma}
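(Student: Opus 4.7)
The plan is to deduce this $L^2$ estimate on the slices directly from the global estimate of Theorem \ref{thm:L2 estimate} via the coarea formula together with Fubini's theorem. Since $\|V_t^{\gamma}\|=\mathcal{H}^{N-1}\lfloor_{\Gamma_t^{\gamma}}$ by the unit-density structure of $V_t^{\gamma}$, the claim \eqref{L2 estimate var} is equivalent to
\[
\int_0^{\infty}\!\int_{\Gamma_t^{\gamma}} H^2\,d\mathcal{H}^{N-1}\,dt<\infty\quad\text{for a.e. }\gamma\in\mathbb{R}.
\]

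First I would fix $t\ge 0$ for which $u(\cdot,t)$ is Lipschitz (true for a.e. $t$, and in fact for all $t$ by \eqref{bounds for level set solution}) and apply the coarea formula to the nonnegative Borel function $x\mapsto H(x,t)^2$, giving
\[
\int_{\Omega} H^2|\nabla u|\,dx=\int_{\mathbb{R}}\int_{\Gamma_t^{\gamma}} H^2\,d\mathcal{H}^{N-1}\,d\gamma.
\]
Integrating in $t$ and swapping the order of integration via Tonelli's theorem yields
\[
\int_{\mathbb{R}}\left(\int_0^{\infty}\!\int_{\Gamma_t^{\gamma}} H^2\,d\mathcal{H}^{N-1}\,dt\right)d\gamma=\int_0^{\infty}\!\int_{\Omega} H^2|\nabla u|\,dx\,dt,
\]
and the right-hand side is finite by Theorem \ref{thm:L2 estimate}. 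Consequently the inner integral in $\gamma$ is finite for a.e. $\gamma\in\mathbb{R}$, which is exactly \eqref{L2 estimate var}.

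The only point requiring a little care is that the integrand $H$ on $\Gamma_t^{\gamma}$ must be well-defined and measurable. This is not really an obstacle: by the definition of $H$, the set where $|\nabla u|=0$ contributes zero to $\int_{\Omega} H^2|\nabla u|\,dx$, and by Lemma \ref{basic}(i) for a.e.\ pair $(\gamma,t)$ one has $|\nabla u|>0$ at $\mathcal{H}^{N-1}$-a.e.\ point of $\Gamma_t^{\gamma}$, so $H=\partial_t u/|\nabla u|$ is well-defined on the relevant slice. Measurability of $H^2$ on $\Omega\times(0,\infty)$ (hence on a.e. slice) follows from Rademacher's theorem applied to $u$. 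With these verifications the coarea-plus-Fubini argument described above carries through and delivers the lemma.
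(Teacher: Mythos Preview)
Your proof is correct and follows exactly the route indicated in the paper, which simply states that the lemma follows from the coarea formula and Theorem \ref{thm:L2 estimate}. Your version merely spells out the coarea-plus-Tonelli step and the routine measurability checks; there is no substantive difference.
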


Next we prove Lemma \ref{main:total var fin}. 

\begin{proof}[Proof of Lemma \ref{main:total var fin}]
Take any $\phi\in L^{\infty}(0,T),~\eta\in \textrm{Lip}({\mathbb R})$ with $\phi\geq 0,~\eta\geq 0$. From Lemma \ref{lem:approx 1st var fml}, we have for any $X\in \textrm{Lip}(\overline{\Omega};{\mathbb R}^N)$,
\begin{equation}
\begin{aligned}
&\lim_{\varepsilon\to 0}\int_0^T \phi(t)\int_{\partial \Omega}(X\cdot \nu_{\partial \Omega})\sqrt{|\nabla u^{\varepsilon}|^2+\varepsilon^2}\,d\mathcal{H}^{N-1}dt\\
&=\int_0^T \phi(t)\int_{\Omega}\left\{\tr\left((I-\nu\otimes \nu)\nabla X\right)-H(\nu\cdot X)\right\}|\nabla u|\,dxdt
\end{aligned}
\end{equation}
by the same argument as in Theorem \ref{thm:1st var fml}. We replace $X$ by $\eta(u)X$ and apply the coarea formula to obtain
\begin{equation} \label{1st var with bdry}
\begin{aligned}
&\lim_{\varepsilon\to 0}\int_0^T \phi(t)\int_{\partial \Omega}\eta(u)(X\cdot \nu_{\partial \Omega})\sqrt{|\nabla u^{\varepsilon}|^2+\varepsilon^2}\,d\mathcal{H}^{N-1}dt\\
&=\int_0^T \phi(t)\int_{\Omega}\eta(u)\left\{\tr\left((I-\nu\otimes \nu)\nabla X\right)-H(\nu\cdot X)\right\}|\nabla u|\,dxdt\\
&=\int_0^T\int_{{\mathbb R}}\phi(t)\eta(\gamma)\int_{\Gamma_t^{\gamma}}\tr\left((I-\nu\otimes \nu)\nabla X\right)-H(\nu\cdot X)\,d\mathcal{H}^{N-1}d\gamma dt.
\end{aligned}
\end{equation}
Now choose a vector field $X_0\in C^{\infty}(\overline{\Omega})$ such that $X_0=\nu_{\partial \Omega}$ on $\partial \Omega$. By \eqref{1st var with bdry},
\begin{equation}\label{bdry 1st var estimate}
\begin{aligned}
&\lim_{\varepsilon\to 0}\int_0^T \phi(t)\int_{\partial \Omega}\eta(u)\sqrt{|\nabla u^{\varepsilon}|^2+\varepsilon^2}\,d\mathcal{H}^{N-1}dt\\
&\leq C\int_0^T\int_{{\mathbb R}}\phi(t)\eta(\gamma)\left\{\mathcal{H}^{N-1}(\Gamma_t^{\gamma})+\|H\|_{L^1(\Gamma_t^{\gamma})}\right\}d\gamma dt,
\end{aligned}
\end{equation}
where $C=C(\|X_0\|_{C^1},N)>0$ is a constant. Combining \eqref{1st var with bdry} and \eqref{bdry 1st var estimate}, we have
\begin{equation}
\begin{aligned}
&\int_0^T\int_{{\mathbb R}}\phi(t)\eta(\gamma)\int_{\Gamma_t^{\gamma}}\tr\left((I-\nu\otimes \nu)\nabla X\right)\,d\mathcal{H}^{N-1}d\gamma dt\\
&\leq (C+1)\|X\|_{\infty}\int_0^T \int_{{\mathbb R}}\phi(t)\eta(\gamma)\left\{\mathcal{H}^{N-1}(\Gamma_t^{\gamma})+\|H\|_{L^1(\Gamma_t^{\gamma})}\right\}d\gamma dt.
\end{aligned}
\end{equation}
By the arbitrariness of $\phi$ and $\eta$, we have for a.e $(\gamma,t)$,
\begin{equation}
\int_{\Gamma_t^{\gamma}}\tr\left((I-\nu\otimes \nu)\nabla X\right)\,d\mathcal{H}^{N-1}\leq (C+1)\|X\|_{\infty}\left(\mathcal{H}^{N-1}(\Gamma_t^{\gamma})+\|H\|_{L^1(\Gamma_t^{\gamma})}\right).
\end{equation}
Therefore,
\begin{equation} \label{total var finite}
\|\delta V_t^{\gamma}\|(\overline{\Omega})\leq (C+1)\left(\mathcal{H}^{N-1}(\Gamma_t^{\gamma})+\|H\|_{L^1(\Gamma_t^{\gamma})}\right).
\end{equation}
By Lemma \ref{basic}(ii) and Lemma \ref{lem:L2 estimate var}, we conclude the proof.
\end{proof}

By Lemma \ref{main:total var fin}, we may define the first variation on $\partial \Omega$ and its tangential component as in Definition \ref{1st var on bdry}. Now we prove Theorem \ref{main:boundary conditions}.

\begin{proof}[Proof of Theorem \ref{main:boundary conditions}]
The properties (i) and (ii) follow from Lemma \ref{lem:L2 estimate var}, \eqref{dV} and another application of the coarea formula to Theorem \ref{thm:1st var fml}. It remains to demonstrate (iii). Take any $X\in C^1(\partial \Omega;{\mathbb R}^N)$. Let $Y\in C^1(\overline{\Omega};{\mathbb R}^N)$ such that $Y=X-(X\cdot \nu_{\partial \Omega})\nu_{\partial \Omega}$ on $\partial \Omega$. We apply the coarea formula to Theorem \ref{thm:1st var fml} to obtain $\delta V_t^{\gamma}(Y)=\int_{\overline{\Omega}}H\nu\cdot Xd\|V_t^{\gamma}\|$ for a.e. $(\gamma,t)$. By (i) and Lemma \ref{basic}(iii), we deduce $\delta V_t^{\gamma}(Y)=\delta V_t^{\gamma}\lfloor_{\Omega}(Y)$. Therefore $\delta V_t^{\gamma}\lfloor_{\partial \Omega}^{\top}(X)=\delta V_t^{\gamma}\lfloor_{\partial \Omega}(Y)=0$. 
\end{proof}

Lastly, we show Theorem \ref{main:Brakke ineq}.

\begin{proof}[Proof of Theorem \ref{main:Brakke ineq}]
Take any $\eta\in C^{\infty}({\mathbb R})$ with $\eta'>0$. Then $\tilde{u}:=\eta(u)$ is a unique viscosity solution to \eqref{level set MCF} with an initial data $\tilde{g}:=\eta(g)$. By Theorem \ref{thm:Brakke ineq} for $\tilde{u}$, we obtain
\begin{equation}
\begin{aligned}
\left[\int_{\Omega}\phi\eta'(u)|\nabla u|\,dx\right]_{t=t_1}^{t_2}\leq \int_{t_1}^{t_2}\int_{\Omega}\left(\partial_t\phi-H(\nu\cdot \nabla \phi)-\phi H^2\right)\eta'(u)|\nabla u|\,dxdt.
\end{aligned}
\end{equation}
Using the coarea formula, we have
\begin{equation}
\int_{{\mathbb R}}\eta'\,d\gamma\left[\int_{\Gamma_t^{\gamma}}\phi\,d\mathcal{H}^{N-1}\right]_{t=t_1}^{t_2}\leq \int_{{\mathbb R}}\eta'\,d\gamma\int_{t_1}^{t_2}\int_{\Gamma_t^{\gamma}}\left(\partial_t\phi-H(\nu\cdot \nabla \phi)-\phi H^2\right)\,d\mathcal{H}^{N-1}dt.
\end{equation}
Since $\eta$ is arbitrary, we conclude the proof.
\end{proof}

As stated in Section 2, for a.e. $\gamma$, one may verify that $V_t^{\gamma}$ has the semi-decreasing property, i.e. the mapping $t\mapsto \|V_t^{\gamma}\|(\phi)-C_{\phi}t$ is non-increasing for non-negative $\phi\in C^2(\overline{\Omega})$ with $C_{\phi}:=\mathcal{H}^{N-1}(\Gamma_0^{\gamma})\sup_{\{\phi>0\}}\frac{|\nabla \phi|^2}{\phi}$. Thus we may define $\|V_t^{\gamma}\|(\phi)$ left-continuously for all $t\in [0,\infty)$. Moreover, by Lemma \ref{basic}(ii), $\|V_t^{\gamma}\|(\phi)\leq \|\phi\|_{\infty}\mathcal{H}^{N-1}(\Gamma_0^{\gamma})$. Therefore, we may define a Radon measure $\|V_t^{\gamma}\|$ for all $t\in [0,\infty)$. Since $t\mapsto \|V_t^{\gamma}\|$ is left-continuous, Theorem \ref{main:Brakke ineq} is still valid for any $0\leq t_1<t_2<\infty$.

\section*{Appendix : Proof of Lemma \ref{initial datum}}
In this appendix, we carry out a technical construction of the appropriate initial data $g$ as in Lemma \ref{initial datum}. For the simplicity of notation, we will define the initial data $g$ on the whole space ${\mathbb R}^N$.

\begin{proof}[Proof of Lemma \ref{initial datum}]
Step1. For each $A\subseteq {\mathbb R}^N$, we define the signed distance by $d_A(x):=\dist(x,A)-\dist(x,A^c)$ for $x\in {\mathbb R}^N$. Since $U,\Omega$ are smooth, there exists $R>0$ such that $d_U$ is smooth in $\{-2R<d_U<2R\}$ and that $d_{\Omega}$ is smooth in $\{-2R<d_{\Omega}<2R\}$. Choose $\eta\in C_c^{\infty}({\mathbb R})$ to satisfy
\[
\eta(s)=1~\textrm{if}~|s|<R,\quad
\eta(s)=0~\textrm{if}~|s|\geq \tfrac{3}{2}R,\quad
0\leq \eta\leq 1.
\]
Define $X\in C^{\infty}({\mathbb R}^N;{\mathbb R}^N)$ by
\begin{equation}
X:=\eta(d_U)\eta(d_{\Omega})\{(I-\nabla d_{\Omega}\otimes \nabla d_{\Omega})\nabla d_U+d_{\Omega}X_0\},
\end{equation}
where $X_0$ is a vector field to be defined later. Here, remark that $X|_{\partial \Omega}$ is a tangent vector field of $\partial \Omega$. Let $\Phi:{\mathbb R}^N\times {\mathbb R}\rightarrow {\mathbb R}^N$ be a 1-parameter group generated by $X$, i.e.
\begin{equation}
\begin{cases}
\frac{d}{ds}\Phi(x,s)=X(\Phi(x,s)),\\
\Phi(x,0)=x.
\end{cases}
\end{equation}
By Assumption \ref{asm}, we compute
\begin{equation}
\left.\frac{d}{ds}d_U(\Phi(x,s))\right|_{s=0}=1-(\nabla d_U\cdot \nabla d_{\Omega})^2=1\not=0
\quad\textrm{on}~\partial U\cap \partial \Omega.
\end{equation}
By the implicit function theorem, there exist a neighborhood $W$ of $\partial U\cap \partial \Omega$, a positive number $s_0>0$ and a smooth function $f:W\rightarrow {\mathbb R}$ such that
\begin{equation}
\{(x,s)\in W\times (-s_0,s_0)~|~f(x)=s\}=\{(x,s)\in W\times (-s_0,s_0)~|~d_U(\Phi(x,-s))=0\}.
\end{equation}

\begin{figure}[htbp]
\centering	
\includegraphics[height=4cm,bb=0 0 712 441]{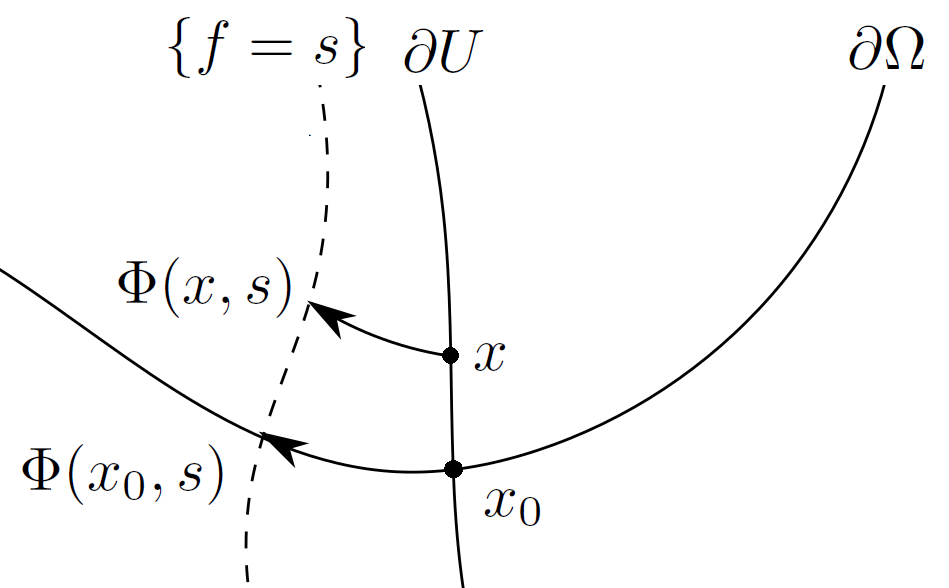}	
\caption{Definition of $f(x)$}\label{fig1}
\end{figure}

By differentiating $d_U(\Phi(x,-f(x)))=0$, we compute
\begin{equation} \label{derivative of f}
\nabla f(x)=\frac{\nabla \Phi(x,-f(x))\nabla d_U(\Phi(x,-f(x)))}{X(\Phi(x,-f(x)))\cdot \nabla d_U(\Phi(x,-f(x)))}.
\end{equation}
Moreover, we may assume the following conditions:
\begin{itemize}
\item[(F1)] $W\subseteq \{-R<d_U<R\}\cap \{-R<d_{\Omega}<R\}$.
\item[(F2)] $\nabla f\cdot \nabla d_U>0$ in $W$.
\item[(F3)] $d_U$ and $f$ have the same sign in $W$.
\item[(F4)] $\Phi(\partial U\cap \partial \Omega,(-s_0,s_0))=\partial \Omega\cap W$.
\item[(F5)] $\nabla f\cdot \nabla d_{\Omega}=0$ on $\partial \Omega\cap W$.
\end{itemize}
In the rest part of Step 1, we verify these properties (F1)-(F5). At first, it is easy to check the property (F1). From \eqref{derivative of f}, for $x_0\in \partial U\cap \partial \Omega$,
\[ \nabla f(x_0)\cdot \nabla d_U(x_0)=\frac{\nabla \Phi(x_0,0)\nabla d_U(x_0)}{X(x_0)\cdot \nabla d_U(x_0)}\cdot \nabla d_U(x_0)=\frac{\nabla d_U(x_0)}{\nabla d_U(x_0)\cdot \nabla d_U(x_0)}\cdot \nabla d_U(x_0)=1.\]
Therefore, taking $W$ smaller, we may assume (F2). Next we verify (F3). On a neighborhood of $(\partial U\cap \partial \Omega)\times \{0\}$, we compute
\begin{equation}\label{sign of dist}
\begin{aligned}
d_U(x)&=d_U(\Phi(\Phi(x,-f(x)),f(x))=\int_0^{f(x)}\frac{d}{ds}\left[d_U(\Phi(\Phi(x,-f(x)),s)\right]\,ds\\
&=\int_0^{f(x)}\nabla d_U(\Phi(\Phi(x,-f(x)),s))\cdot X(\Phi(\Phi(x,-f(x)),s))\,ds.
\end{aligned}
\end{equation}
Here we may assume the integrand of \eqref{sign of dist} is positive since
\[\nabla d_U(\Phi(\Phi(x,-f(x)),s))\cdot X(\Phi(\Phi(x,-f(x)),s))=\nabla d_U(x)\cdot \nabla d_U(x)=1\]
on $(\partial U\cap \partial \Omega)\times \{0\}$. Thus, the property (F3) holds true. Moreover, choosing $s_0$ smaller, we may assume $\Phi(\partial U\cap \partial \Omega,(-s_0,s_0))\subseteq W$. Recalling that $X$ is a tangent vector field of $\partial \Omega$, it is easy to see
\[\Phi(\partial U\cap \partial \Omega,(-s_0,s_0))=\partial \Omega\cap W\cap f^{-1}((-s_0,s_0)).\]
Thus we obtain (F4) by replacing $W$ with $W\cap f^{-1}((-s_0,s_0))$. Finally, we verify (F5).
From (F4), it suffices to prove that for every $x_0\in \partial U\cap \partial \Omega$,
\begin{equation}
\alpha(s):=\nabla f(\Phi(x_0,s))\cdot \nabla d_{\Omega}(\Phi(x_0,s))
\end{equation}
is constantly 0 on $(-s_0,s_0)$. By the group property of $\Phi$, we have $\nabla \Phi(\Phi(x,s),-s)=\{\nabla \Phi(x,s)\}^{-1}$. Recalling \eqref{derivative of f} and {$|\nabla d_U|=1$}, we compute
\begin{equation}
\begin{aligned}
\alpha(s)&=\frac{\left(\{\nabla \Phi(x_0,s)\}^{-1}\nabla d_U(x_0)\right)\cdot \nabla d_{\Omega}(\Phi(x_0,s))}{X(x_0)\cdot \nabla d_U(x_0)}\\
&=\frac{\left(\{\nabla \Phi(x_0,s)\}^{-1}\nabla d_U(x_0)\right)\cdot \nabla d_{\Omega}(\Phi(x_0,s))}{\nabla d_U(x_0)\cdot \nabla d_U(x_0)}\\
&=\left(\{\nabla \Phi(x_0,s)\}^{-1}\nabla d_U(x_0)\right)\cdot \nabla d_{\Omega}(\Phi(x_0,s)).
\end{aligned}
\end{equation}
In particular, $\alpha(0)=(\{\nabla \Phi(x_0,0)\}^{-1}\nabla d_U(x_0))\cdot \nabla d_{\Omega}(x_0)=\nabla d_U(x_0)\cdot \nabla d_{\Omega}(x_0)=0$ since $\nabla \Phi(x_0,0)=\textrm{id}_{{\mathbb R}^N}$. Additionally, we calculate
\begin{equation}
\begin{aligned}
\frac{d}{ds}\{\nabla \Phi\}^{-1}&=-\{\nabla \Phi\}^{-1}\left\{\frac{d}{ds}(\nabla \Phi)\right\}\{\nabla \Phi\}^{-1}\\
&=-\{\nabla \Phi\}^{-1}\left\{\nabla(X\circ \Phi)\right\}\{\nabla \Phi\}^{-1}=-(\nabla X\circ\Phi)\{\nabla \Phi\}^{-1}.
\end{aligned}
\end{equation}
Therefore,
\begin{equation}
\begin{aligned}
\alpha'(s)&=\frac{d}{ds}\left[\left(\{\nabla \Phi(x_0,s)\}^{-1}\nabla d_U(x_0)\right)\cdot \nabla d_{\Omega}(\Phi(x_0,s))\right]\\
&=\left(\{\nabla \Phi(x_0,s)\}^{-1}\nabla d_U(x_0)\right)\cdot \left(\nabla^2 d_{\Omega}(\Phi(x_0,s))X(\Phi(x_0,s))\right)\\
&\quad -\left(\nabla X(\Phi(x_0,s))\{\nabla \Phi(x_0,s)\}^{-1}\nabla d_U(x_0)\right)\cdot \nabla d_{\Omega}(\Phi(x_0,s))\\
&=\left(\{\nabla \Phi(x_0,s)\}^{-1}\nabla d_U(x_0)\right)\cdot \left(\nabla^2 d_{\Omega}(y)X(y)-(\nabla X(y))^T\nabla d_{\Omega}(y)\right),
\end{aligned}
\end{equation}
here we write $y:=\Phi(x_0,s)\in \partial \Omega\cap W$. On the other hand, we compute 
\[
\begin{aligned}
(\nabla X)^T&=\left[\nabla\{(I-\nabla d_{\Omega}\otimes \nabla d_{\Omega})\nabla d_U+d_{\Omega}X_0\}\right]^T\\
&=-(\nabla d_{\Omega}\cdot \nabla d_U)\nabla^2d_{\Omega}-(\nabla d_{\Omega}\otimes \nabla d_U)\nabla^2 d_{\Omega}+(I-\nabla d_{\Omega}\otimes \nabla d_{\Omega})\nabla^2d_U\\
&\quad +X_0\otimes \nabla d_{\Omega}+d_{\Omega}(\nabla X_0)^T
\end{aligned}
\]
in $W$. Recalling that $d_{\Omega}=0$ on $\partial \Omega$, $|\nabla d_{\Omega}|=1$ and $(\nabla^2d_{\Omega})\nabla d_{\Omega}=0$, we calculate
\[
\begin{aligned}
&(\nabla^2 d_{\Omega})X-(\nabla X)^T\nabla d_{\Omega}\\
&\quad =(\nabla^2 d_{\Omega})(I-\nabla d_{\Omega}\otimes \nabla d_{\Omega})\nabla d_U+\nabla^2 d_{\Omega}(d_{\Omega}X_0)+(\nabla d_{\Omega}\cdot \nabla d_U)(\nabla^2d_{\Omega})\nabla d_{\Omega}\\
&\quad \quad
+(\nabla d_{\Omega}\otimes \nabla d_U)(\nabla^2 d_{\Omega})\nabla d_{\Omega}-(I-\nabla d_{\Omega}\otimes \nabla d_{\Omega})(\nabla^2d_U)\nabla d_{\Omega}\\
&\quad \quad 
-(X_0\otimes \nabla d_{\Omega})\nabla d_{\Omega}-d_{\Omega}(\nabla X_0)^T\nabla d_{\Omega}\\
&\quad =(\nabla^2d_{\Omega})\nabla d_U-(I-\nabla d_{\Omega}\otimes \nabla d_{\Omega})(\nabla^2d_U)\nabla d_{\Omega}-X_0
\end{aligned}
\]
on $\partial \Omega\cap W$. Hence we put $X_0:=(\nabla^2 d_{\Omega})\nabla d_U-(I-\nabla d_{\Omega}\otimes \nabla d_{\Omega})(\nabla^2d_U)\nabla d_{\Omega}$ to obtain $(\nabla^2 d_{\Omega})X-(\nabla X)^T\nabla d_{\Omega}=0$ on $\partial \Omega\cap W$. Then $\alpha'(s)=0$ on $(-s_0,s_0)$.\par

Step 2. We shall extend $f$ to ${\mathbb R}^N$. Take $r>0$ so that
\begin{equation}
\{-2r<d_U<2r\}\cap \{-2r<d_{\Omega}<2r\}\subseteq W.
\end{equation}
Choose $\zeta\in C_c^{\infty}({\mathbb R})$ satisfying
\[
\zeta(s)=1~\textrm{if}~|s|<r,\quad
\zeta(s)=0~\textrm{if}~|s|\geq \tfrac{3}{2}r,\quad
0\leq \zeta\leq 1.
\]
Then we define a smooth function $g_0$ in $\{-2r<d_U<2r\}$ by
\begin{equation}
g_0:=(1-\zeta\circ d_{\Omega})d_U+(\zeta\circ d_{\Omega})f.
\end{equation}
From (F1)-(F5), one may verify the following:
\begin{itemize}
\item[(G1)] There exists $\delta>0$ such that $V:=\{-\delta\leq g_0\leq \delta\}\subseteq \{-r<d_U<r\}$.
\item[(G2)] For $x\in \{-2r<d_U<2r\}$, $g_0(x)=0$ if and only if $d_U(x)=0$.
\item[(G3)] $g_0$ and $d_U$ have the same sign in $\{-2r<d_U<2r\}$.
\item[(G4)] $\nabla g_0\cdot \nabla d_{\Omega}=0$ on $\partial \Omega\cap \{-2r<d_U<2r\}$.
\item[(G5)] Choosing $\delta>0$ even smaller, $\nabla g_0\not=0$ on $V$.
\end{itemize}
Now take any $\phi\in C^{\infty}({\mathbb R})$ such that
\[
\begin{cases}
\phi(-s)=-\phi(s)&\textrm{for any}~s\in {\mathbb R},\\
\phi'(s)>0,~\phi''(s)\geq 0&\textrm{if}~-\delta<s\leq 0,\\
\phi(s)=-1&\textrm{if}~s\leq -\delta.
\end{cases}
\]
Then we set $g:{\mathbb R}^N\rightarrow {\mathbb R}$ by
\begin{equation}
g(x):=
\begin{cases}
\phi(g_0(x))&\textrm{if}~-2r<d_U(x)<2r,\\
1&\textrm{if}~d_U(x)>r,\\
-1&\textrm{if}~d_U(x)<-r.
\end{cases}
\end{equation}

Step 3. From (G1)-(G4), we see that $g\in C^{\infty}({\mathbb R}^N)$ with $\spt~\nabla g\subseteq V$ and the properties (i) and (ii). It remains to verify the property (iii). By (G5), we have $c_0:=\inf_{V}|\nabla g_0|>0$. We compute
\[
\begin{aligned}
\textrm{div}\left(\frac{\nabla g}{\sqrt{|\nabla g|^2+\varepsilon^2}}\right)&=\frac{1}{\sqrt{|\nabla g|^2+\varepsilon^2}}\,\tr\left(\left(I-\frac{\nabla g\otimes \nabla g}{|\nabla g|^2+\varepsilon^2}\right)\nabla^2 g\right)\\
&=\frac{1}{\sqrt{(\phi')^2|\nabla g_0|^2+\varepsilon^2}}\,\tr\left(\left(I-\frac{(\phi')^2\nabla g_0\otimes \nabla g_0}{(\phi')^2|\nabla g_0|^2+\varepsilon^2}\right)\phi'\nabla^2 g_0\right)\\
&\quad
+\frac{1}{\sqrt{(\phi')^2|\nabla g_0|^2+\varepsilon^2}}\,\tr\left(\left(I-\frac{(\phi')^2\nabla g_0\otimes \nabla g_0}{(\phi')^2|\nabla g_0|^2+\varepsilon^2}\right)(\phi''\nabla g_0\otimes \nabla g_0)\right).
\end{aligned}
\]
We estimate the first term as follows.
\[
\left|\frac{1}{\sqrt{(\phi')^2|\nabla g_0|^2+\varepsilon^2}}\,\tr\left(\left(I-\frac{(\phi')^2\nabla g_0\otimes \nabla g_0}{(\phi')^2|\nabla g_0|^2+\varepsilon^2}\right)\phi'\nabla^2 g_0\right)\right|\leq \frac{\phi'|\nabla^2 g_0|}{\sqrt{(\phi')^2|\nabla g_0|^2+\varepsilon^2}}\leq \frac{1}{c_0}|\nabla^2 g_0|.
\]
For the second term,
\[
\begin{aligned}
\left|\frac{1}{\sqrt{(\phi')^2|\nabla g_0|^2+\varepsilon^2}}\,\tr\left(\left(I-\frac{(\phi')^2\nabla g_0\otimes \nabla g_0}{(\phi')^2|\nabla g_0|^2+\varepsilon^2}\right)(\phi''\nabla g_0\otimes \nabla g_0)\right)\right|\\
=\frac{\varepsilon^2|\phi''||\nabla g_0|^2}{\left((\phi')^2|\nabla g_0|^2+\varepsilon^2\right)^{3/2}}\leq \frac{\varepsilon^2c_1|\phi''|}{\left(c_0^2(\phi')^2+\varepsilon^2\right)^{3/2}}|\nabla g_0|,
\end{aligned}
\]
where $c_1:=\sup_V|\nabla g_0|<\infty$. Therefore, by the coarea formula,
\begin{equation}
\begin{aligned}
&\int_{{\mathbb R}^N}\left|\textrm{div}~\left(\frac{\nabla g}{\sqrt{|\nabla g|^2+\varepsilon^2}}\right)\right|\,dx\leq C\int_{V}|\nabla^2 g_0|+\frac{\varepsilon^2|\phi''|}{\left(c_0^2(\phi')^2+\varepsilon^2\right)^{3/2}}|\nabla g_0|\,dx\\
&\leq C'\left(1+\int_{-\delta}^0\frac{\varepsilon^2\phi''(s)}{\left(c_0^2(\phi'(s))^2+\varepsilon^2\right)^{3/2}}\,ds\right)=C'\left(1+\int_{-\delta}^0\frac{d}{ds}\left(\frac{\phi'(s)}{\sqrt{c_0^2(\phi'(s))^2+\varepsilon^2}}\right)\,ds\right)\\
&\leq C'\left(1+\frac{\phi'(0)}{\sqrt{c_0^2(\phi'(0))^2+\varepsilon^2}}\right)<\infty,
\end{aligned}
\end{equation}
where $C, C'>0$ are some constants independent of $\varepsilon$.
\end{proof}

\bibliographystyle{plain}
\bibliography{Level set MCF with Neumann bdry cond}

\end{document}